\numberwithin{equation}{section}
\newtheorem{theorem}{Theorem}[section]
\newtheorem{lemma}[theorem]{Lemma}
\newtheorem{prop}[theorem]{Proposition}
\newtheorem{question}[theorem]{Question}
\newtheorem{cor}[theorem]{Corollary}
\theoremstyle{definition}       {

\newtheorem{example}[theorem]{Example}

\newtheorem{defi}[theorem]{Definition}

}
\newcommand{\N}{\mathbb{N}}
\newcommand{\R}{\mathbb{R}}
\newcommand{\eps}{\varepsilon}
\newcommand{\cah}{\mathcal{H}}
\def\beq{\begin{equation}}
\def\eeq{\end{equation}}
\newcommand{\semmi}[1]{{}}
\newcommand{\norm}[1]{\left\lVert #1 \right\rVert}
\newcommand{\bp}{\begin{proof}}
\newcommand{\ep}{\end{proof}}
\DeclareMathOperator{\diam}{diam}
\DeclareMathOperator{\supp}{supp}
\newcommand{\su}{\subset}
\newcommand{\NN}{\mathbb{N}}
\newcommand{\RR}{\mathbb{R}}
\newcommand{\al}{\alpha}
\renewcommand{\phi}{\varphi}
\newcommand{\mj}[1]{}
\def\rr{{\mathbb R}}
\def\R{{\mathbb R}}
\def\nn{{\mathbb N}}
\def\de{\delta}
\def\Om{\Omega}
\def\om{\omega}
    \newcommand\sig\sigma
    \newcommand\Sig\Sigma
\title{Signed null sequences and Hausdorff dimension}
\author{Rich\'ard Balka, Korn\'elia H\'era, and Gergely Kiss}
\address{HUN-REN Alfr\'ed R\'enyi Institute of Mathematics, Re\'altanoda u.~13--15, H-1053 Budapest, Hungary, AND Institute of Mathematics and Informatics, Eszterh\'azy K\'aroly Catholic University, Le\'anyka u.~4, H-3300 Eger, Hungary}
\email{balkaricsi@gmail.com}
\address
{HUN-REN Alfr\'ed R\'enyi Institute of Mathematics, Re\'altanoda u.~13--15, H-1053 Budapest, Hungary, and Mathematical Institute, University of Bonn, Endenicher Allee 60, 53115 Bonn, Germany}
\email{herakornelia@gmail.com}
\address{HUN-REN Alfr\'ed R\'enyi Institute of Mathematics, Re\'altanoda u.~13--15, H-1053 Budapest, Hungary, and  Corvinus University of Budapest, Department of Mathematics,
Fővám tér 13--15, H-1093 Budapest, Hungary}
\email{kigergo57@gmail.com}
\thanks{RB, KH, GK were supported by the National Research, Development and Innovation Office -- NKFIH, grant no.~146922.
RB and GK were also supported by the J\'anos Bolyai Research Scholarship of the Hungarian Academy of Sciences. GK was also supported by NKFIH grants no. FK 142993 and STARTING no. 150576.
KH also received support from the Deutsche Forschungsgemeinschaft (DFG, German Research Foundation) under Germany's Excellence Strategy - GZ 2047/1, Projekt-ID 390685813  as well as SFB 1060}
\subjclass[2020]{28A78, 28A80, 51F30, 54E45}
\keywords{signed null sequences, Hausdorff dimension, Levy vectors}
\begin{document}

\begin{abstract}
We investigate the convergence of signed null sequences of the form
\[
\sum_{n=1}^\infty \varepsilon_n a_n, \quad \varepsilon_n \in \{-1,1\},
\]
where $(a_n)$ tends to zero in $\mathbb{R}^d$.
Our main result shows that for any such sequence, the set of sign sequences yielding convergence has full Hausdorff dimension in the natural ultrametric topology.
This answers a question of Mattila in the one-dimensional case, for which we provide an elementary proof.
Moreover, if $(a_n)\notin \ell^1$ in one dimension, then for every $L\in\mathbb{R}$ the set of sign sequences with sum $L$ also has Hausdorff dimension $1$.
In higher dimensions the analogous statement does not hold in full generality, but it is guaranteed if the sequence has $d$ linearly independent L\'evy vectors.
\end{abstract}

\maketitle

\section{Introduction}

We study the convergence properties of infinite series of the form
\[
\sum_{n=1}^\infty \varepsilon_n a_n, \quad \varepsilon_n \in \{-1, 1\},
\]
where $(a_n)$ is a real or vector-valued sequence converging to zero. Such sign-alternated series appear naturally in real analysis, probability theory, harmonic analysis, and more recently, in fractal geometry.

A fundamental question is: for which sequences $(a_n)$ and for which sign sequences $(\varepsilon_n)$ does the series converge?

The classical starting point is Riemann's rearrangement theorem \cite{riemann}, which asserts that any conditionally convergent real series can be rearranged to converge to any real number or to diverge. This highlights how fragile convergence can be in the absence of absolute summability.

From a probabilistic point of view, Kolmogorov’s three-series theorem \cite{kolmogorov} provides a more stable perspective. When applied to sign-alternated series, it characterizes almost sure convergence in terms of square summability: the series converges for almost every choice of signs precisely when $(a_n) \in \ell^2$.

Beyond mere convergence, the distributional behavior of random sign series has attracted considerable attention.
Answering a question of Erdős \cite{erdos1939}, Solomyak \cite{solomyak1995} showed that if $a_n=\lambda^n$ then the distribution of the random sum $\sum \varepsilon_n a_n$ absolutely continuous for Lebesgue-almost every $\lambda \in (1/2,1)$ with an $L^2$ density. For a summary of related results the reader might consult \cite{peres-schlag-solomyak2000}.

From a geometric measure theory standpoint, an intriguing question emerges: how large is the set of sign sequences $(\varepsilon_n) \in \{-1,1\}^\mathbb{N}$ for which the series converges? Surprisingly, even when convergence is neither absolute nor almost sure, the set of convergent sign sequences can be large in a fractal sense. Specifically, for any null sequence $(a_n)$ in $\mathbb{R}^d$, the set of sign sequences for which the series converges has full Hausdorff dimension (equal to $\dim \Omega = 1$ in the natural ultrametric topology). This answers a question posed by Pertti Mattila \cite{Ma2} in the case $d = 1$. Similar problems were tackled in \cite{CDMV}, and they might be able to solve the problem with sophisticated martingale methods in case of $d=1$. However, our method is much more elementary, and can be generalized to higher dimensions.

More precisely, we fix a null sequence $(a_n) \subset \mathbb{R}^d$, and consider the signed sums $\sum \varepsilon_n a_n$ for $\varepsilon = (\varepsilon_n) \in \Omega := \{-1,1\}^{\mathbb{N}}$. We equip $\Omega$ with the ultrametric
\begin{equation}\label{eqrho}
\rho(\varepsilon, \omega) := 2^{-\min\{n : \varepsilon_n \neq \omega_n\}},
\end{equation}
which makes $(\Omega, \rho)$ a Polish space with Hausdorff dimension 1.

\begin{theorem}
Let $(a_n)$ be a real sequence such that $a_n\to 0$. Then  
 \begin{equation*}
\dim\left\{\eps \in \Omega: \sum_{n=1}^\infty  \varepsilon_n a_n \text{ is convergent} \right\}=1.
\end{equation*} 
\end{theorem}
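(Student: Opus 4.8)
The plan is to build, for each small $\delta>0$, a large Cantor-type subset of $\Omega$ consisting of sign sequences that produce a convergent series, and to show this subset has Hausdorff dimension at least $1-\delta$. The key observation is that since $a_n\to 0$, we can partition $\mathbb{N}$ into consecutive blocks $B_1<B_2<\cdots$ (with $B_k$ placed late enough that $|a_n|$ is tiny there) on which we have complete freedom to choose signs, as long as on a sparse set of ``control'' coordinates we correct the running partial sum back toward $0$. Concretely, I would fix the signs on all but a carefully chosen thin subsequence of indices; on that thin subsequence the signs are forced (or chosen from a short deterministic rule) so that the partial sums of the whole series stay bounded and in fact converge. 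The freedom on the ``most'' coordinates is what yields large dimension, and the sparsity of the forced coordinates is what keeps the dimension close to $1$.

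The main steps, in order: (1) Choose an increasing sequence $N_1<N_2<\cdots$ of integers so that $|a_n|\le 2^{-k}$ for all $n\ge N_k$; between consecutive ``checkpoints'' we will allow free signs and then use one or a few coordinates to steer. (2) On the free coordinates in block $k$ put arbitrary signs $\varepsilon_n\in\{-1,1\}$; let $S$ denote the accumulated partial sum just before the steering coordinates of block $k$. Since $|S|$ is then at most (number of free terms in block $k$)$\times 2^{-(k-1)}$, which we can keep bounded by a constant $C$ by controlling block lengths, and since each steering term has size roughly $2^{-k}$ but we have $\asymp 2^k$ steering coordinates available (arranged by making the blocks long enough), we can choose the steering signs greedily so that after block $k$ the partial sum has absolute value $\le 2^{-k}$. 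This is a one-dimensional ``bin-packing''/intermediate-value argument: with enough small steps of size $\le 2^{-k}$ one can land within $2^{-k}$ of the origin starting from any point in $[-C,C]$. (3) Tail estimate: beyond block $k$ the terms are all $\le 2^{-k}$ in size, and the construction forces the partial sums to remain within, say, $2^{-k}+(\text{one block's worth})$ of each other, which drives them to a common limit; so the series converges for every sequence in the constructed set $E_\delta$. (4) Dimension lower bound: $E_\delta$ is (up to relabeling coordinates) a product-type set where on a subset of coordinates of lower density $1-o(1)$ we are free, so by a standard mass-distribution/natural-measure argument $\dim E_\delta\ge 1-\delta$; letting $\delta\to 0$ and using countable stability of Hausdorff dimension (or just that $\sup_\delta(1-\delta)=1$) gives dimension $1$. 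The upper bound $\le 1$ is automatic since $\dim\Omega=1$.

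The step I expect to be the main obstacle is (2): organizing the bookkeeping so that the number of free coordinates and the number of steering coordinates in each block are simultaneously compatible with (a) keeping the pre-steering partial sum bounded by an absolute constant, (b) having enough steering room of step-size $\le 2^{-k}$ to return within $2^{-k}$ of the origin, and (c) ensuring the \emph{proportion} of steering coordinates among the first $N$ indices tends to $0$ (so the dimension loss vanishes). The tension is that small $|a_n|$ helps the steering precision but a long run of tiny terms could let the partial sum drift; the resolution is that we never let a block be so long that its free part can push $|S|$ past the fixed constant $C$ before we steer — equivalently, we interleave steering coordinates frequently enough in ``count'' but, because their relative density is still $o(1)$, we lose no dimension. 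A secondary subtlety is making the choice of free-coordinate pattern independent of $\varepsilon$ (so that $E_\delta$ really is a nice Cantor set with a clean lower bound on dimension): this is handled by fixing the index pattern in advance and only letting the \emph{values} on steering coordinates depend on the earlier free choices.
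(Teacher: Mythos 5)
Your plan breaks at exactly the point you flagged, and the failure is not a matter of fine-tuning block lengths. There are two regimes and both fail. If, as you propose, the pre-steering drift in block $k$ is only kept below a fixed constant $C$, then the element of your set in which every free sign equals $+1$ has partial sums that swing by roughly $C$ inside every block, so they are not Cauchy and the series need not converge; convergence of \emph{every} member of the set forces the free mass between consecutive steering stretches to tend to $0$. But once you impose that, the steering coordinates cannot have vanishing relative density for a general null sequence: since the free signs are arbitrary, the steering capacity must dominate the worst-case drift, which is the full free mass of the stretch, and when all terms over a long stretch have comparable size --- e.g.\ $a_n=2^{-k}$ for $2^k\le n<2^{k+1}$, which is a null sequence not in $\ell^1$ --- any steering set whose mass matches the free mass has cardinality at least comparable to that of the free set. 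Hence in each correction period at most about half of the coordinates can be free, the natural product measure only yields $\dim\ge 1/2$, and no choice of block lengths or checkpoint spacing removes this cap. The tension between ``worst-case in-block drift $\to 0$'' (needed for convergence) and ``steering density $\to 0$'' (needed for dimension $1-\delta$) is intrinsic to any design that splits coordinates into individually free ones and dedicated forced ones.

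The paper avoids this dichotomy altogether. It cuts $\mathbb{N}$ into blocks $I_j$ of fixed length $k$ and, instead of freeing most coordinates, it only requires of each block that the increment $s_{j+1}-s_j$ of the partial sum point toward the target and have modulus at least $\max_{n\in I_j}|a_n|$. A pairing argument --- flip the sign at the index of maximal modulus in the block, and/or flip all the remaining signs simultaneously --- shows that whatever the values $a_n$ in the block are and whatever the past partial sum is, at least $2^{k-2}$ of the $2^k$ sign patterns on the block are admissible. Thus the entropy loss is two bits per $k$ coordinates, independently of the relative sizes of the $a_n$, and the mass distribution principle gives dimension at least $(k-2)/k$, while the sign-and-magnitude condition forces convergence (in the non-$\ell^1$ case even to an arbitrarily prescribed limit $L$; the $\ell^1$ case is trivial since then every sign choice converges). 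If you want to salvage your construction, replace ``mostly free coordinates plus a sparse forced set'' by this kind of count of admissible patterns per block: the freedom has to be spread across each block as a large fraction of its $2^k$ patterns rather than concentrated on designated coordinates.
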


We next study the so-called achievement set, defined as
\begin{equation*}
S(a_n) := \left\{ x \in \mathbb{R} : \exists \varepsilon \in \Omega \text{ such that } \sum \varepsilon_n a_n = x \right\}.
\end{equation*}
When $(a_n) \in \ell^1$, the set $S(a_n)$ is compact, and can be a Cantor set, a closed interval, or a mixture of both, depending on the decay rate. For instance, if $a_n = (1/2)^n$, then $S(a_n) = [-2, 2]$, whereas if $a_n = (1/3)^n$, the achievement set is a classic Cantor set. For sequences not in $\ell^1$, we have the following result. 

\begin{theorem}
Let $(a_n)$ be a null sequence in $\mathbb{R}$. If $(a_n) \notin \ell^1$, then for every $L \in \mathbb{R}$,
\begin{equation*}
\dim\left\{\varepsilon \in \Omega : \sum_{n=1}^\infty \varepsilon_n a_n = L\right\} = 1.
\end{equation*}
\end{theorem}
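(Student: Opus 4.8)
The plan is to prove, for every $s<1$, that $E_L:=\{\varepsilon\in\Omega:\sum_n\varepsilon_n a_n=L\}$ contains a compact set of Hausdorff dimension at least $s$; since $E_L\subseteq\Omega$ and $\dim\Omega=1$, this gives $\dim E_L=1$. Such a set I would obtain as a Cantor-type subset $C\subseteq E_L$ carrying a probability measure $\mu$ with a Frostman bound $\mu(B(\varepsilon,r))\le C_s r^{s}$, and then apply the mass distribution principle. First the harmless reductions: the coordinatewise sign flip $\varepsilon_n\mapsto\varepsilon_n\operatorname{sgn}(a_n)$ is an isometry of $(\Omega,\rho)$ sending $E_L$ to the analogous set for $(|a_n|)$, so we may assume $a_n\ge 0$; the indices with $a_n=0$ give unconstrained free coordinates and may be thrown away; and $(a_n)\notin\ell^1$ now reads $\sum_n a_n=\infty$, with $a_n\to 0$ and $a_n>0$.

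I would build $C$ in consecutive \emph{super-blocks} $B_1<B_2<\dots$ of $\mathbb N$, alternating inside each $B_k$ long \emph{free blocks} $F$ with short \emph{steering blocks} $G$. On a free block $F$ keep only the sign patterns all of whose partial sums over $F$ lie in $[-2\theta_k,2\theta_k]$, for a parameter $\theta_k\to 0$; on a steering block $G$ choose the signs greedily so as to drive the running value of $\sum\varepsilon_n a_n$ to within $\rho_k:=\sup_{n\in B_k}a_n$ of $L$ (note $\rho_k\to 0$). Two elementary facts carry the argument. (i) If $F$ is chosen with $\sum_{n\in F}a_n^2\le\theta_k^2$, Kolmogorov's maximal inequality gives at least $\tfrac34\cdot 2^{|F|}$ allowed patterns on $F$, so each free block costs only a constant factor. (ii) Since $\sum_n a_n=\infty$ and $a_n\to 0$, greedy steering towards $L$ over a finite block whose mass exceeds the current distance to $L$ reaches $L$ and afterwards overshoots by at most one term, so it ends within $\rho_k$ of $L$; and as the distance entering a steering block is $O(\theta_{k-1})$, such blocks carry only mass $O(\theta_{k-1})$. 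Telescoping over super-blocks, the partial sums stay within $\rho_k+O(\theta_k)\to 0$ of $L$, so $\sum_n\varepsilon_n a_n=L$ for every $\varepsilon\in C$.

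Give $C$ the uniform measure $\mu$ (choose an allowed pattern on each free block independently and uniformly; the steering blocks are then determined). A length-$N$ cylinder, equivalently a ball of radius $\asymp 2^{-N}$, has $\mu$-mass at most $(4/3)^{q(N)}2^{-f(N)}$, where $f(N)$ counts the free coordinates in $[1,N]$ and $q(N)$ the free blocks meeting $[1,N]$ (the partially covered free block only helps the bound); hence $\dim C\ge\liminf_N\big(f(N)-q(N)\log_2\tfrac43\big)/N$. Everything therefore comes down to choosing the super-block endpoints $N_k$ and the $\theta_k$ so that the free coordinates have density $1$, the steering coordinates density $0$, and $q(N)=o(N)$. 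I expect this bookkeeping to be the crux. The tension is that $\theta_k$ must be small --- so the series converges and the steering blocks carry little mass --- but not too small relative to $\rho_k$, since a steering block must gather mass $\gtrsim\theta_k$ out of terms of size $\le\rho_k$ and so has length $\gtrsim\theta_k/\rho_k$, which must stay negligible against the free-block lengths. The clean choice is $\rho_k/\theta_k\to 0$, e.g.\ $\theta_k=\rho_k^{2/3}$, with $N_k$ growing very fast: then the number of free blocks in $B_k$ is $O(N_k\theta_k)=o(N_k)$ and the total steering length in $B_k$ is $o(N_k)$. A further nuisance is that $(a_n)$ may have very non-uniform terms, making a free block defined by ``$\sum_{n\in F}a_n^2\le\theta_k^2$'' enormously long; one caps free-block lengths by a slowly growing $s_k=o(N_{k-1})$ --- this only improves estimate (i) --- so that no single free block is a positive fraction of $N$. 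With these choices the argument runs exactly as for Theorem~1.1, the sole difference being that the steering blocks aim at $L$ instead of $0$; since the running sum is always kept $o(1)$-close to $L$, the distance the steering must cover is $o(1)$, just as in the case $L=0$, and no extra mass is needed.
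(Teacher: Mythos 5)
Your overall framework (Cantor-type subset carrying a Frostman measure, then the Mass Distribution Principle) matches the paper's, but your implementation via long ``free'' blocks controlled by Kolmogorov's maximal inequality and short greedy ``steering'' blocks has a genuine gap, and it sits exactly where you flag the crux. Your length estimate for a steering block is $\gtrsim \theta_k/\rho_k$ with $\rho_k=\sup_{n\in B_k}a_n$, but the actual length is governed by the \emph{local} term sizes, not by $\rho_k$: a steering block must accumulate mass of order $\theta_k$ (the worst-case drift allowed on the preceding free block, or a residual inherited from earlier, larger terms) out of whatever terms happen to come next, and these can be astronomically smaller than $\rho_k$. Concretely, take a sequence alternating stretches of $j2^j$ terms equal to $1/j$ with enormously long stretches of terms equal to $\eta_j$, where $\eta_j$ is chosen fantastically small. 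If a super-block $B_k$ meets a moderate stretch, then $\theta_k=\rho_k^{2/3}\gtrsim j^{-2/3}$, and every steering block landing in the adjacent tiny stretch needs length about $\theta_k/\eta_j$, while your free blocks there are capped at the slowly growing $s_k=o(N_{k-1})$; so inside that stretch the steering coordinates have density near $1$, $f(N)/N$ collapses at scales $N$ ending there, and since the Mass Distribution Principle needs the Frostman bound at \emph{all} small radii (a liminf over $N$), the dimension bound is lost. The same difficulty reappears at regime boundaries even without the cap: after the last moderate term the running residual can be of size $\approx 1/j$, and covering it greedily with terms of size $\eta_j$ forces a single steering block longer than everything preceding it. So the stated parameter choices ($\theta_k=\rho_k^{2/3}$, capped $s_k$) do not make ``total steering length in $B_k$ is $o(N_k)$'' true in general; repairing this needs locally adaptive thresholds (e.g.\ excursion bound $2\sigma_F$ with $\sigma_F^2=\sum_{n\in F}a_n^2$) together with a policy of deferring the correction of large residuals until terms of comparable size reappear, and a verification that the carried residuals still tend to $0$.

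It is worth noting that the paper's proof avoids this bookkeeping entirely and is purely combinatorial: it uses blocks of \emph{fixed} length $k$, and the defining condition on $\Lambda$ is only that each block moves the partial sum toward $L$ by at least $|a_{m(j)}|$ (the largest term of the block) or crosses $L$. Divergence of $\sum_n|a_n|$ forces infinitely many crossings, hence convergence to $L$, and within each block at least $2^{k-2}$ of the $2^k$ sign patterns are admissible (flip $\varepsilon_{m(j)}$ and/or all the other signs), giving $\dim\ge (k-2)/k$ directly with no $\ell^2$ estimates, no free/steering dichotomy, and no density-of-coordinates analysis over non-uniform scales. Since progress is always measured against the block's own largest term, ill-matched term sizes never arise; this is the mechanism your construction is missing.
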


For higher dimensions ($d \geq 2$), we show that the set of convergent sequences still has Hausdorff dimension 1. 

\begin{theorem}
Let $d\geq 2$. For any sequence $(a_n)$ in $\RR^d$ with $\|a_n\| \to 0$ we have
\begin{equation*} 
\dim \left\{ \eps \in \Om: \sum_{n=1}^\infty \eps_n a_n  \text{ is convergent} \right\}=1.
\end{equation*}
\end{theorem}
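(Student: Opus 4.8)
The plan is to first strip the problem down to an \emph{irreducible} situation, then to build, for every $s<1$, a Cantor subset of the convergence set of Hausdorff dimension at least $s$. If $(a_n)\in\ell^1$ the series converges absolutely for every $\eps$, so the set in question is all of $\Om$ and has dimension $1$; hence I may assume $\sum_n\|a_n\|=\infty$. Since an absolutely summable perturbation of $(a_n)$ changes nothing relevant, I replace $(a_n)$ by its orthogonal projection onto the smallest linear subspace $V\su\RR^d$ with $\sum_n\dist(a_n,V)<\infty$; this $V$ exists because the family of such subspaces is closed under intersection (for fixed $U_1,U_2$ one has $\dist(x,U_1\cap U_2)\le C(U_1,U_2)(\dist(x,U_1)+\dist(x,U_2))$, by a compactness argument). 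After projecting, $(a_n)$ lies in $V\cong\RR^{d'}$ and minimality of $V$ forces the \emph{irreducibility property}
\[
\sum_n|\langle a_n,u\rangle|=\infty\qquad\text{for every unit }u\in V,
\]
since each $u^{\perp}\cap V$ is a proper subspace of $V$. If $d'=0$ the tail vanishes and every $\eps$ works; if $d'=1$ this is precisely the one‑dimensional convergence theorem above (absorb the signs of $a_n$ into $\eps$). So I assume $d'\ge 2$ and write $d$ for $d'$.

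Next I would fix $s<1$ and construct the Cantor set. Partition $\NN$ into consecutive finite blocks, alternating \emph{free} blocks $F_1,F_2,\dots$ and \emph{correcting} blocks $G_1,G_2,\dots$; on $\bigcup_k F_k$ every sign is allowed. The $F_k$ are chosen so that $\sigma_k:=\sum_{n\in F_k}\|a_n\|\to 0$ slowly enough that $|F_k|\to\infty$ and $\sum_{k:\,F_k\cup G_k\su[1,N]}1=o(N/\log N)$ — this is possible for any null $(a_n)\notin\ell^1$, since short‑variation stretches are long and grow. On $G_k$ the signs are constrained so that $\bigl\|\sum_{n\le\max G_k}\eps_n a_n\bigr\|\le\eta_k$, where $\eta_k\downarrow 0$ (no summability is needed, since the target is the fixed point $0$). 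For this to be satisfiable whatever the earlier free signs were, $G_k$ must absorb an error of size at most $\eta_{k-1}+\sigma_k$. Here irreducibility enters: given $N$ and $R$, compactness of the sphere together with $\sum_{n\ge N}|\langle a_n,u\rangle|=\infty$ for each $u$ yields a finite $G\su[N,\infty)$ whose zonotope $Z_G=\{\sum_{n\in G}t_n a_n:t_n\in[-1,1]\}$ has inradius $\min_{\|u\|=1}\sum_{n\in G}|\langle a_n,u\rangle|\ge R$; taking $R=\eta_{k-1}+\sigma_k$ and using a rounding (Beck--Fiala/Steinitz type) bound, any target in that inradius ball is reached within $O_d\!\bigl(\max_{n\in G_k}\|a_n\|\bigr)$, and pushing $G_k$ far enough out makes this $\le\eta_k$.

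For $\eps\in\mathcal E$ the partial sums at the ends of the $G_k$ tend to $0$, while inside any block they move by at most $\eta_{k-1}+\sigma_k+O(\text{granularity})\to0$, so $\sum\eps_n a_n$ converges. For the dimension, the crucial point is that a correcting block, although forced, is \emph{almost free}: the vectors in $G_k$ span all directions with comparable mass, so the covariance $\sum_{n\in G_k}a_n a_n^{T}$ is nondegenerate, and a local–CLT/anticoncentration estimate for sums of independent bounded vectors gives, for every admissible target, at least $2^{|G_k|}2^{-O_d(\log|G_k|)}$ valid signings of $G_k$. Equipping $\mathcal E$ with the measure $\mu$ that is uniform on each $F_k$ and uniform over valid signings on each $G_k$, a length‑$N$ cylinder gets $\mu$‑mass at most $2^{-N}\prod_{G_k\su[1,N]}2^{O_d(\log|G_k|)}\cdot 2^{o(N)}=2^{-(1-o(1))N}$, using that the number of blocks inside $[1,N]$ is $o(N/\log N)$. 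Hence $\mu(B(\eps,r))\le r^{s}$ for small $r$, so $\dim_H\mathcal E\ge s$ by the mass distribution principle, and letting $s\uparrow1$ proves the theorem.

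The main obstacle is coordinating the vector‑valued correction with the requirement that the construction be dimensionally harmless. A correcting block must simultaneously (i) contain vectors in essentially all directions carrying enough mass to cancel an arbitrary accumulated error — this is exactly where irreducibility is used, and is why the higher‑dimensional statement fails in general without the L\'evy‑vector hypothesis; (ii) possess many valid signings for \emph{every} target, uniformly, which is the anticoncentration input; and (iii) be placed so that the partial sums never wander by more than $o(1)$ during the correction. Making the sizes of the $F_k$, the error scales $\eta_k$, the granularity $\max_{n\in G_k}\|a_n\|$ and the zonotope inradii mutually compatible — and in particular establishing the anticoncentration bound with the needed uniformity over targets — is the heart of the argument; everything else is bookkeeping.
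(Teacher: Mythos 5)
Your reduction to an irreducible subspace and the free/correcting block scheme are reasonable, but there is a genuine gap exactly where you yourself locate ``the heart of the argument'': the anticoncentration count is not proved, and it is false at the level of generality you need. You require that \emph{every} admissible target --- any vector of norm up to $\eta_{k-1}+\sigma_k$, which by construction is of the order of the inradius of the zonotope of $G_k$ --- be hit within $O_d(\max_{n\in G_k}\|a_n\|)$ by at least $2^{|G_k|}2^{-O_d(\log|G_k|)}$ signings. A local CLT gives such a bound only for targets of norm $O\bigl((\sum_{n\in G_k}\|a_n\|^2)^{1/2}\bigr)$; targets of norm comparable to the total mass $\sum_{n\in G_k}\|a_n\|$ lie in the large-deviation regime. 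Concretely, with $d=1$, $a_n=c$ for all $n$ in a block of length $m$ (inradius $mc$) and a target $y$ with $y/c$ of order $m/K$, the number of signings with $|\sum_n\eps_nc-y|\le Cc$ is of order $2^m e^{-\Theta(m/K^2)}/\sqrt{m}$, an exponential (not polylogarithmic) entropy loss; inflating the inradius by a constant factor does not help, and you cannot force the incoming error down to the CLT scale of the block, since it is of the order of the mass of the preceding free block while $\sum_n\|a_n\|^2$ may even be finite. Moreover, you do not control $|G_k|$: how many indices are needed to accumulate directional mass $\eta_{k-1}+\sigma_k$ in all directions is dictated by the sequence, so the correcting blocks may occupy all but a vanishing fraction of $[1,N]$, and a constant-fraction entropy loss there destroys the cylinder estimate $\mu([\tau])\le 2^{-(1-o(1))N}$. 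A second, smaller gap: convergence requires control of \emph{all} partial sums inside a correcting block, and a rounding bound for the block total (Beck--Fiala/Steinitz for the endpoint) does not give it; you would need a prefix-controlled rounding (B\'ar\'any--Grinberg type) together with a fractional representation $y=\sum_n t_na_n$ whose prefix sums are themselves small, which you do not construct.

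For comparison, the paper's proof avoids any distributional input. Fix $k$, cut $\NN$ into blocks of length $k$, and identify two sign sequences when they differ on each block only by an overall sign flip. The quotient $\Theta$ is a sequence space over an alphabet of size $2^{k-1}$, hence has dimension $\frac{k-1}{k}$ by the Mass Distribution Principle, and the sign-selection lemma of Kadets--Kadets (Lemma~\ref{finsign}, via Corollary~\ref{corka}) applied to the block sums shows that every equivalence class contains a convergent signing, i.e.\ the natural $1$-Lipschitz projection maps the convergence set onto $\Theta$. Only one bit per block of length $k$ is sacrificed, deterministically and uniformly over the past, and letting $k\to\infty$ gives dimension $1$; no irreducibility reduction, anticoncentration, or zonotope inradius estimates are needed.
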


However, the structure of the achievement set $S(a_n) \subset \mathbb{R}^d$ becomes more subtle. Even if $(a_n) \notin \ell^1$, it may happen that $S(a_n) \ne \mathbb{R}^d$. Under certain conditions, such as the lack of a direction $e \in \mathbb{S}^{d-1}$ with $\langle a_n, e \rangle \in \ell^1$, the set $S(a_n)$ is dense in $\mathbb{R}^d$. These considerations are closely related to the classical Lévy–Steinitz theorem on conditionally convergent vector-valued series.

Beyond the classical Lévy–Steinitz theorem, several recent contributions on achievement sets \cite{BGM,GM,filo} have investigated subsums of conditionally convergent series. In contrast to the $\{\pm 1\}$-valued sign sequences considered here, achievement sets are generated by $\{0,1\}$-valued choices, corresponding to the collection of all subsums of a series. Bartoszewicz, G\l \k{a}b and Marchwicki \cite{BGM} showed that such sets may form intervals, Cantor-type sets, or combinations thereof, depending on the decay of the sequence. G\l \k{a}b and Marchwicki \cite{GM} analyzed the planar case in relation to the Lévy–Steinitz theorem, while Marchwicki and Vlasák \cite{filo} studied subsums in finite dimensions and gave conditions under which the achievement set coincides with the full space. These works apply combinatorial and topological methods rather than the Lévy–vector techniques that play a central role in our approach.

In our setting we show that the condition of the Lévy–Steinitz theorem guarantees that the achievement set in dimension $d\ge 2$ is dense, and by adopting the construction of \cite{BGM}, we demonstrate that it is not necessarily closed (i.e., not the whole space). Motivated by these perspectives, we develop the Lévy–vector framework in detail and prove the following result.

\begin{theorem}
Let $(a_n)$ be a null sequence in $\R^d$ with $d$ linearly independent L\'evy vectors.
Then $S(a_n) = \R^d$.
\end{theorem}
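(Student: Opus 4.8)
Since $S(a_n)\subseteq\mathbb{R}^d$ always holds, the content is to show that every $x\in\mathbb{R}^d$ is attained, i.e.\ $x=\sum_{n=1}^\infty\varepsilon_n a_n$ for some $\varepsilon\in\Omega$. The plan is to run $d$ one‑dimensional greedy arguments in parallel, one per coordinate, driving each coordinate of the partial sum $P_n:=\sum_{m\le n}\varepsilon_m a_m$ to the corresponding coordinate of $x$. Let $v_1,\dots,v_d$ be the given linearly independent Lévy vectors and $v_1^*,\dots,v_d^*$ the dual basis, so that the ``$i$‑th coordinate'' of $y\in\mathbb{R}^d$ is $\langle v_i^*,y\rangle$. (One may as well first apply the fixed linear isomorphism carrying the $v_i$ to the standard basis: being a homeomorphism of $\mathbb{R}^d$ it transforms $S(a_n)$, the null‑sequence property and the Lévy‑vector property covariantly.)

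\emph{What the hypothesis yields.} I expect the Lévy‑vector framework to provide pairwise disjoint infinite index sets $M_1,\dots,M_d$, covering $\mathbb{N}$ outside an $\ell^1$ remainder $R$, together with reals $(t_n)_{n\in M_i}$ such that for $n\in M_i$ one has $a_n=t_nv_i+b_n$ with $\sum_{n\in M_i}|t_n|=\infty$ and $\sum_{n\in M_i}\|b_n\|<\infty$. Thus each $M_i$ carries a radial part along $v_i$ with divergent absolute sum and a genuinely summable transverse part, and all the divergence of $(a_n)$ is packaged into the $d$ scalar sequences $(t_n)_{n\in M_i}$.

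\emph{The construction.} Fix $x$, put $x^{(i)}:=\langle v_i^*,x\rangle$, give the indices of $R$ arbitrary signs, and process the indices of $\bigcup_i M_i$ in increasing order: for $n\in M_i$, pick $\varepsilon_n\in\{-1,1\}$ so as to move $\langle v_i^*,P_n\rangle$ toward $x^{(i)}$. The key computation is $\langle v_i^*,P_n\rangle=\sum_{m\in M_i,\,m\le n}\varepsilon_m\langle v_i^*,a_m\rangle+\sum_{m\notin M_i,\,m\le n}\varepsilon_m\langle v_i^*,a_m\rangle$: the first sum is the one we steer, and its increments $\langle v_i^*,a_m\rangle=t_m+\langle v_i^*,b_m\rangle$ tend to $0$ with divergent absolute sum; the second sum has absolutely summable increments, since for $m\in M_j$ with $j\ne i$ one has $\langle v_i^*,a_m\rangle=\langle v_i^*,b_m\rangle$ with $\sum_{j\ne i}\sum_{m\in M_j}\|b_m\|<\infty$, while for $m\in R$ the increment is bounded by $\|v_i^*\|\,\|a_m\|$ and $\sum_{m\in R}\|a_m\|<\infty$. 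So $\langle v_i^*,P_n\rangle$ is a controllable $\pm$‑walk with steps tending to $0$ and divergent total, perturbed by a bounded‑variation error; the one‑dimensional greedy argument (exactly as in the one‑dimensional results above) then forces $\langle v_i^*,P_n\rangle\to x^{(i)}$. The $d$ steerings live on disjoint index sets and hence do not interfere, so all coordinates converge and $P_n\to\sum_i x^{(i)}v_i=x$, giving $x\in S(a_n)$.

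\emph{Main obstacle.} The real work is the middle step: extracting from the definition of ``Lévy vector'' the decomposition $a_n=t_nv_i+b_n$ with $(t_n)_{n\in M_i}\notin\ell^1$ and $(b_n)_{n\in M_i}\in\ell^1$, with the $M_i$ disjoint and exhausting $\mathbb{N}$ up to an $\ell^1$ set. A priori one obtains only directions along which $(a_n)$ is \emph{asymptotically} radial, and one must thin each associated index set so that the transverse parts become summable while the radial absolute sum remains infinite, and organise $d$ such sets consistently — this is precisely where the linear independence of the $v_i$ enters, and why genuinely $d$ Lévy vectors are needed rather than merely the absence of an $\ell^1$ direction. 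The decoupling of the $d$ one‑dimensional greedy arguments rests entirely on the transverse parts being $\ell^1$, so that the cross‑direction drift in each coordinate has bounded variation; under the weaker hypothesis this drift need not be controllable, which is consistent with $S(a_n)$ being only dense there. The handling of the remainder $R$ is then routine.
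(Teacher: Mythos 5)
Your plan (steer each dual coordinate $\langle v_i^*,P_n\rangle$ greedily along its own Lévy index set, with the cross-terms controlled because the transverse parts are summable) is close in spirit to the paper's proof, but it rests on a structural claim that is simply false, not merely hard: that the Lévy index sets $M_1,\dots,M_d$ can be chosen to cover $\mathbb{N}$ up to an $\ell^1$ remainder $R$. Having $d$ linearly independent Lévy vectors gives you, via the Lévy-vector characterization, disjoint index sets $M_i$ with $a_n=t_nv_i+b_n$, $\sum_{M_i}|t_n|=\infty$, $\sum_{M_i}\|b_n\|<\infty$ --- but it says nothing about the complement. For instance, in $\mathbb{R}^2$ interleave the terms $\tfrac1n e_1$, $\tfrac1n e_2$ and $\tfrac1n\tfrac{e_1+e_2}{\sqrt2}$: the hypothesis holds (with $u_1=e_1$, $u_2=e_2$), yet any two Lévy index sets with summable transverse parts must omit all but an $\ell^1$ portion of the terms pointing in the third divergent direction, so $R$ is never $\ell^1$. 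Consequently your two uses of that assumption break down: giving $R$ ``arbitrary signs'' need not even produce a convergent contribution, and the drift that $R$ injects into each steered coordinate need not have bounded variation, so the one-dimensional greedy argument no longer forces $\langle v_i^*,P_n\rangle\to x^{(i)}$. You flag this as the ``main obstacle'' to be overcome by thinning, but no thinning can make the leftover indices summable in the example above.

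The repair is exactly where the paper's proof differs from yours: the leftover indices are handled not by summability but by sign-balancing. By the Kadets--Kadets lemma (Lemma~\ref{finsign} and Corollary~\ref{corka}), for any finite or infinite block of vectors of norm at most $\eta$ one can choose signs so that all partial sums over that block have norm $O(\eta)$. The paper runs the construction in blocks $[q_{m-1},q_m)$ on which $\|a_n\|<2^{-m}$, uses Lemma~\ref{lem1dim} to steer the coefficients along each Lévy subsequence toward the coordinates $\beta_i^{(m-1)}$ of the current error, and uses the balancing lemma on the remaining indices of the block so their contribution is at most $K_0 2^{-(m-1)}$; the transverse parts $\omega_{k_n^i}$ contribute only their (summable) tails. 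This yields convergence of the full signed series to $x$ without any assumption on the complement of the Lévy index sets. If you replace your treatment of $R$ by this blockwise balancing (and run your coordinate steering blockwise so the errors decay geometrically), your argument becomes essentially the paper's proof; as written, it has a genuine gap.
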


This allows us to formulate an analogue of the result for prescribed sums in dimension $d \geq 2$.

\begin{theorem}
Let $(a_n)$ be a sequence in $\RR^d$ with $d$ linearly independent Lévy vectors and $\|a_n\|\to 0$. Then for all $L \in \RR^d$ we obtain
\begin{equation}	
\dim\left\{\eps \in \Om: \sum_{n=1}^\infty  \varepsilon_n a_n =L \right\}=1.
\end{equation}
\end{theorem}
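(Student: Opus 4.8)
\emph{Reduction.} Since $\dim\Om=1$, it suffices to construct a closed set $A\su\{\eps\in\Om:\sum_n\eps_na_n=L\}$ with $\dim A=1$, and I would realize it as the branch space $[T]$ of a subtree $T\su\bigcup_m\{-1,1\}^m$. The coordinates will be split, by a rule independent of the signs, into a ``branching'' set $B\su\N$ of density $1$ and a ``control'' set $C=\N\sm B$ of density $0$: on the coordinates of $B$ both signs are allowed (apart from the pruning below), while on a coordinate $n\in C$ the sign is chosen greedily so as to decrease $\|P_w-L\|$, $w$ being the node of length $n-1$ already built and $P_w=\sum_{i\le n-1}w_ia_i$. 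If this forces $\sum_n\eps_na_n=L$ along every branch, then, equipping $[T]$ with the measure $\mu$ that is fair on each branching coordinate, every length-$k$ cylinder meeting $[T]$ has $\mu$-mass $2^{-|B\cap\{1,\dots,k\}|}=2^{-k(1-o(1))}$, so the mass distribution principle gives $\HH^s([T])>0$ for all $s<1$ and $\dim[T]=1$.

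\emph{Construction of $B$, $C$ and $T$.} The error $X_n:=P_n-L$ moves freely at the $B$-coordinates and is pushed toward $0$ at the $C$-coordinates; once $\|X_n\|$ reaches the scale $\delta_n:=\sup_{m>n}\|a_m\|\to0$ it stays there. The crucial observation is that, because $\|a_n\|\to0$, a prescribed amount of ``pushing mass'' $\sum_{n\in C}\|a_n\|$ can be gathered from a \emph{sub-linear} number of (necessarily tiny) terms, so that $C$ can be taken of density $0$ and still, over every stretch of coordinates, carry enough mass to overpower the $B$-part of the walk. Concretely, I would partition $\N$ into consecutive ``free stretches'' whose lengths tend to $\infty$ slowly (adapted to the decay of $\delta_n$), each followed by a short ``control group'', the union of the control groups being $C$; on the $B$-coordinates of each free stretch I allow both signs, except I delete those sign patterns whose partial sums leave the ball, centred at the stretch's starting point, of radius $c_i$ times the $\ell^2$-norm of the stretch's terms, where $c_i\to\infty$ slowly. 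By the Azuma--Hoeffding inequality in $\R^d$ this deletes only a summable fraction of patterns, so $\mu([T])>0$ and $B$ still has density $1$. For a surviving branch the $B$-excursion over each stretch tends to $0$ (a consequence of $\|a_n\|\to0$ and a suitable choice of the stretch lengths), and the ensuing control group --- selected, using the $d$ linearly independent L\'evy vectors, so as to span $d$ independent directions with enough mass in each --- drives $\|X\|$ back down to $O(\delta_n)$. Hence $\|X_n\|\to0$, that is, $\sum_n\eps_na_n=L$ on all of $[T]$, which completes the proof.

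\emph{The main obstacle.} The genuinely non-routine step is the $d$-dimensional steering invoked above: one must extract from the L\'evy--Steinitz / L\'evy--vector analysis behind the theorem $S(a_n)=\R^d$ a quantitative form of the statement that a short group of late coordinates, spread over $d$ independent directions, can cancel a small error in \emph{every} direction of $\R^d$ by greedy sign choices while keeping the intermediate partial sums under control. In dimension one this amounts to the elementary fact that the signed subsums of decreasing positives $b_1\ge b_2\ge\cdots$ with $\sum_i b_i\ge r$ are $(\max_i b_i)$-dense in $[-r,r]$; for $d\ge2$ it is precisely what the L\'evy--vector construction supplies. Intertwined with it is the bookkeeping of choosing the free-stretch lengths, the control-group sizes, the thresholds $c_i$ and the decay rate of $\delta_n$ so that the control coordinates are at the same time sub-linear in number (which gives $\dim B=1$) and locally abundant enough to pin the partial sums to $L$; this balance --- delicate precisely when $\|a_n\|$ tends to $0$ very slowly --- is the heart of the argument, but is routine once the framework of the earlier theorem is in hand.
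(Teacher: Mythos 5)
Your overall architecture --- leave a density-one set $B$ of coordinates (almost) free, reserve a sparse control set $C$ on which the signs steer the partial sums to $L$, and apply the mass distribution principle to the resulting tree --- is in the spirit of the paper's proof, but the two steps you treat as routine are exactly where the argument breaks. First, on the free coordinates you only obtain cancellation at the $\ell^2$-scale: after Azuma-type pruning at threshold $c_i$ times the $\ell^2$-norm of a stretch, a surviving branch's excursion is typically genuinely of order $c_i\bigl(\sum_{n\in \mathrm{stretch}}\|a_n\|^2\bigr)^{1/2}$, and since $(a_n)$ need not lie in $\ell^2$, these errors accumulated over long windows need not stay bounded, let alone tend to $0$. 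The paper avoids this by not leaving the signs genuinely free: it groups the coordinates into blocks of length $k$, quotients by a global sign flip of each block, and uses the deterministic signing lemma (Lemma~\ref{finsign}, Corollary~\ref{corka}) to select inside every equivalence class signs whose partial sums are bounded by a constant times $\max_n\|a_n\|$ --- cancellation at the max-norm scale at the cost of only one bit per block, giving dimension at least $(k-d-1)/k$ and then $k\to\infty$. Random signs with summable pruning cannot reach the max-norm scale; forcing it would delete all but a vanishing proportion of patterns and destroy the dimension bound.

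Second, the $d$-dimensional steering that you defer as ``the main obstacle'' is not available with a density-zero control set made of short consecutive groups. The Lévy-vector condition $\sum_n\|a_n\|\,\chi\!\left(a_n/\|a_n\|\in B(u_i,\eps)\right)=\infty$ is purely global and gives no local lower bound on the steering mass inside a prescribed window. For instance, take $a_n=(u_1+\theta_n u_2)/\log\log n$ for $n\notin\{2^{2^j}\}$ with $\theta_n\to 0$ slowly, and $a_n=u_2/\log\log n$ for $n=2^{2^j}$: both $u_1,u_2$ are Lévy vectors, yet between consecutive $u_2$-steering opportunities the free coordinates produce a $u_2$-component excursion whose typical size dwarfs the single available steering term, so on a $\mu$-full set of branches of your tree the $u_2$-component diverges; moreover greedy sign choice in $\R^d$ only guarantees $\|X_n\|^2\le\|X_{n-1}\|^2+\|a_n\|^2$, so it cannot repair this either. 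The paper's mechanism is structural rather than greedy: it reserves in every block of length $k$ one coordinate for each of the $d$ Lévy subsequences provided by Proposition~\ref{levyfact} (so the steering coordinates are interleaved through all of $\NN$, not confined to sparse groups), steers each $u_i$-component separately via the one-dimensional Lemma~\ref{lem1dim}, handles the leftover reserved coordinates with Corollary~\ref{corka}, and uses Theorem~\ref{thmlevy} ($S((a_n)_{n\in G})=\R^d$) to hit the exact value $L$. Without max-norm cancellation on the free part and this globally interleaved, direction-by-direction use of the Lévy subsequences, your outline does not yield convergence to $L$ on a set of positive measure, so there is a genuine gap.
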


The paper is organized as follows. 
In Subsection~\ref{sec:notation}, we collect the necessary notation and preliminary facts on Hausdorff dimension and ultrametric spaces. 
Section~\ref{s2} contains our main results in the one-dimensional setting, including some preliminary remarks on achievement sets and the proof of their full Hausdorff dimension (Theorem \ref{thm1dim}). 
In Section~\ref{s3}, we turn to higher dimensions and establish the convergence properties of signed null sequences in $\mathbb{R}^d$ (Theorem \ref{thm1hdim}).  
In Section~\ref{s:sums} we investigate the achievement sets of a null sequence in higher dimensions. We show that even if the sequence satisfies natural conditions, the achievement set is not necessarily the whole space $\RR^d$, but it is a dense subset of it if it satisfies the so-called Lévy–Steinitz condition (see condition \eqref{LS}, Proposition \ref{propcex1} and Example \ref{propcex2}). We develop the Lévy–vector framework and provide sufficient conditions ensuring that the achievement set coincides with the full space (see Theorem \ref{thmlevy}). 
At the end of this section Theorem \ref{thmdhdim} shows that if $(a_n)\subset\mathbb{R}^d$ admits a full system of linearly independent Lévy vectors, then for every $L\in\mathbb{R}^d$, the set of sign sequences realizing $L$ has Hausdorff dimension~1.
Finally, Section~\ref{sec:open} lists several open questions and directions for future research.

\subsection{Notation and preliminaries}\label{sec:notation}

Let $(X,\rho)$ be a metric space. The open ball of center $x$ and radius $r$ will be denoted by $B(x,r)$.
For a set $U \su X$, $U_{\de}=\bigcup_{x \in U} B(x,\de)$ denotes the open $\de$-neighborhood of $U$, and $\diam(U)$ denotes the diameter of $U$ that is $\diam(U):=\sup\{\rho(x,y)| x,y\in U\}$.

The metric space $(X,\rho)$ is called an {\it ultrametric space} if the triangle inequality is replaced with the stronger inequality $\rho(x,y) \leq \max(\rho(x,z), \rho(y,z))$ for all $x, y, z \in X$.

Let $s \geq  0$, $\de \in (0,\infty]$ and $A \su X$.
The {\it $s$-dimensional Hausdorff $\de$-premeasure} of $A$ is
    $$
            \cah^s_{\de}(A)
        =
            \inf
            \left\{
                \sum_{i=1}^{\infty}
                    (\diam(U_i))^s
            :
                A \su \bigcup_{i=1}^{\infty} U_i,
            \ %
                \diam(U_i)
            \leq
                \de
            \ %
            (i=1,2,\dots)
            \right\},
    $$
and the {\it $s$-dimensional Hausdorff measure} of $A$ is defined as $\cah^s(A)=\lim_{\de \to 0+} \cah^s_{\de}(A)$.
The {\it Hausdorff dimension} of $A$ is defined as
$\dim A=\sup\{s: \cah^s(X)>0\}$.

For the well-known properties of Hausdorff measures and dimension, see e.\,g.~\cite{Fa} or \cite{Ma}.

In the arguments of this paper we use frequently the following theorem that is called Mass Distribution Principle (\cite[p.60]{Fa} and \cite[Theorem~4.19]{MP}). 

\begin{theorem}[Mass Distribution Principle]\label{thm_MDP}
Let $\mu$ be a mass distribution on a Borel set $F$ and suppose that for some $s\in \mathbb{R}^+$ there are numbers
$c > 0$ and $\delta > 0$ such that
$$\mu(U) \le c(\diam(U)^s)$$ 
for all sets $U$ with $\diam(U)\le \delta$. Then $H^s(F)\ge \mu(F)/c$ and
$s\le \dim F$.
\end{theorem}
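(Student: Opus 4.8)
The plan is to derive the lower bound $\cah^s(F) \ge \mu(F)/c$ directly from the covering definition of the Hausdorff premeasure $\cah^s_{\de}$, and then to read off $s \le \dim F$ from the fact that a mass distribution assigns positive total mass to $F$. The argument is entirely elementary: it uses only monotonicity and countable subadditivity of $\mu$, together with the monotonicity of the premeasures $\cah^s_{\de}$ as $\de$ decreases.

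First I would fix an arbitrary countable cover $\{U_i\}_{i=1}^\infty$ of $F$ with $\diam(U_i) \le \de$ for every $i$. Since passing to closures does not change diameters and the inclusion $F \su \bigcup_i U_i$ is preserved, I may assume without loss of generality that each $U_i$ is closed, hence Borel and $\mu$-measurable. Monotonicity and countable subadditivity of $\mu$ then give
$$
\mu(F) \le \mu\Big(\bigcup_{i=1}^\infty U_i\Big) \le \sum_{i=1}^\infty \mu(U_i),
$$
and since $\diam(U_i) \le \de$, the hypothesis $\mu(U_i) \le c\,(\diam(U_i))^s$ applies to each term, yielding $\mu(F) \le c \sum_i (\diam(U_i))^s$.

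Taking the infimum over all admissible covers gives $\mu(F) \le c\,\cah^s_{\de}(F)$. The same inequality holds with $\de$ replaced by any $\de' \le \de$, because every cover of mesh at most $\de'$ is in particular a cover of mesh at most $\de$ and still satisfies the hypothesis on each piece. Letting $\de' \to 0+$ and using $\cah^s(F) = \lim_{\de' \to 0+}\cah^s_{\de'}(F) = \sup_{\de'} \cah^s_{\de'}(F)$, I conclude $\mu(F) \le c\,\cah^s(F)$, i.e. $\cah^s(F) \ge \mu(F)/c$, which is the first assertion. For the dimension bound I would simply note that a mass distribution on $F$ satisfies $\mu(F) > 0$, so $\cah^s(F) \ge \mu(F)/c > 0$; by the definition $\dim F = \sup\{t : \cah^t(F) > 0\}$ this forces $s \le \dim F$.

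The main (and essentially only) obstacle is the measurability point flagged in the second step: the sets $U_i$ appearing in the definition of $\cah^s_{\de}$ are a priori arbitrary subsets of $X$, so the chain of inequalities bounding $\mu(F)$ is not immediately justified. I resolve this by replacing each $U_i$ with its closure, which keeps it a valid cover of the same mesh while making it $\mu$-measurable; equivalently, one could work throughout with the outer measure induced by $\mu$, for which monotonicity and countable subadditivity hold on all subsets. Beyond this bookkeeping there is no genuine difficulty.
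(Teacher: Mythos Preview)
Your proof is correct and is the standard argument for the Mass Distribution Principle. Note, however, that the paper does not give its own proof of this theorem: it is stated as a preliminary tool with a citation to \cite[p.~60]{Fa} and \cite[Theorem~4.19]{MP}, so there is nothing to compare against beyond observing that your argument coincides with the classical proof found in those references.
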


For $1 \leq p < \infty$, $\ell^p$ denotes the space of real valued sequences $(a_n)_{n \in \nn}$ with
$\sum_{n=1}^\infty |a_n|^p < \infty$.

\section{Main results in dimension 1}\label{s2}

Recall that $\Omega=\{-1,1\}^{\NN}$ denotes the set of infinite sequences consisting of $\pm 1$. The metric $\rho$ defined as in \eqref{eqrho} provides that $(\Omega,\rho)$ is an ultrametric Polish space and as we noted $\dim \Omega=1$.
For $\eps\in \Omega$ and $n\in \NN$ let $\eps \restriction n$ denote $\eps$ restricted to its first $n$ coordinates. For a finite word $\tau\in \{-1,1\}^n$ let $[\tau]=\{\eps\in \Omega: \eps\restriction n=\tau\}$.

Let $(a_n)$ be a real sequence such that $a_n\to 0$. The following theorem states that the sign sequences $\eps\in \Omega$ for which $\sum_n \eps_n a_n$ converges form a set of maximal Hausdorff dimension in $\Omega$. Moreover, if $(a_n)\not\in \ell^{1}$, then we can even prescribe the limit $\sum_n \eps_n a_n$. Recall that if the sequence $(a_n)$ is in $\ell^{1}$, then for all $\eps\in \Omega$ the sum $\sum_n \eps_n a_n$ converges, and the achievement set $S(a_n)$ is compact. Indeed, $\Omega$ is a compact set and every sum of the form $\sum_n\eps_n a_n$ converges. As the function $\eps \mapsto \sum_n\eps_n a_n$ is continuous as $(a_n)\in \ell^1$, the achievement set is a continuous image of a compact space, hence it is also compact.
\begin{theorem}\label{thm1dim}
Let $(a_n)$ be a real sequence such that $a_n\to 0$. Then
 \begin{equation*}
\dim\left\{\eps \in \Omega: \sum_{n=1}^\infty  \varepsilon_n a_n \text{ is convergent} \right\}=1.
\end{equation*}
If $(a_n) \not\in\ell^{1}$, then for all $L \in \RR$ we obtain
\begin{equation}\label{eq:l1}	
\dim\left\{\eps \in \Om: \sum_{n=1}^\infty  \varepsilon_n a_n =L \right\}=1.
\end{equation}
\end{theorem}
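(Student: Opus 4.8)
The plan is to reduce everything to a single statement and then prove that via the Mass Distribution Principle. First, two reductions. If $(a_n)\in\ell^{1}$ then $\sum_n\varepsilon_na_n$ converges for every $\varepsilon\in\Omega$, so the convergence set is all of $\Omega$ and the first assertion is trivial; hence I assume $(a_n)\notin\ell^{1}$. Since $\{\varepsilon:\sum_n\varepsilon_na_n=L\}\subseteq\{\varepsilon:\sum_n\varepsilon_na_n\text{ converges}\}$ and $\dim\Omega=1$, the second assertion implies the first, so it is enough to prove: \emph{for $(a_n)\notin\ell^{1}$ with $a_n\to0$ and every $L\in\RR$, the set $A_L:=\{\varepsilon\in\Omega:\sum_n\varepsilon_na_n=L\}$ satisfies $\dim A_L=1$.} For this, by the Mass Distribution Principle (Theorem~\ref{thm_MDP}), I would fix $s\in(0,1)$ and construct a closed set $F\subseteq A_L$, a Borel probability measure $\mu$ on $F$, and $c>0$ with $\mu([\tau])\le c\,2^{-sn}$ for all $n$ and $\tau\in\{-1,1\}^{n}$ (recall $\diam[\tau]=2^{-n}$); this yields $\dim F\ge s$, and letting $s\uparrow1$ finishes.

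For the construction, I would split $\N$ into a ``free'' part $C$ and a ``determined'' part $D$ of upper density at most $1-s$, interleaved so that $|C\cap[1,n]|\ge sn-o(n)$, and group $\N$ into consecutive blocks $I_1,I_2,\dots$. Put $\varepsilon\in F$ if, on $D$, each $\varepsilon_n$ is the sign minimizing $|S_n-L|$ where $S_n=\sum_{m\le n}\varepsilon_ma_m$ (with a fixed tie-break rule), the signs on $C$ being free except that, inside every block $I_i$, the partial sums of $(\varepsilon_na_n)_{n\in C\cap I_i}$ must stay in $[-h_i,h_i]$ for a threshold $h_i$ that will be chosen with $h_i\to0$. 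Let $\mu$ be the $(\tfrac12,\tfrac12)$-Bernoulli measure on the free coordinates, conditioned on this last event.

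Then two things need checking. For the mass bound: the restriction inside $I_i$ has, by Kolmogorov's maximal inequality for the martingale $\sum_{n\in C\cap I_i}\varepsilon_na_n$ (partial sums taken in increasing order of $n$), conditional probability at least $1-\sigma_i^2/h_i^2$ with $\sigma_i^2:=\sum_{n\in C\cap I_i}a_n^2$; choosing the $h_i$ so that $\sum_i\sigma_i^2/h_i^2<\infty$ makes $\mu$ a genuine positive measure, and since these events involve disjoint blocks of free coordinates one gets $\mu([\tau])\le\big(\prod_i(1-\sigma_i^2/h_i^2)\big)^{-1}2^{-|C\cap[1,n]|}\le c\,2^{-sn}$ for $\tau\in\{-1,1\}^n$, so the Mass Distribution Principle gives $\dim F\ge s$. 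For the convergence: on $D$ the greedy rule drops $|S_n-L|$ by $|a_n|$ whenever $|S_n-L|\ge|a_n|$, so provided the determined coordinates in each stretch carry enough $\ell^1$-mass to absorb both the error carried over and the drift of the free coordinates (which is at most $h_i$ inside $I_i$ by construction), $|S_n-L|$ gets pushed down to $O(\sup_{m\ge n}|a_m|)+O(h_i)\to0$; hence $S_n\to L$ for every $\varepsilon\in F$.

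The hard part will be making all of this simultaneously feasible, i.e. choosing $C\sqcup D$ and the blocks $I_i$ (hence the $h_i$) so that: the free part of each block carries so little $\ell^2$-mass that $h_i$ can tend to $0$ while keeping $\sum_i\sigma_i^2/h_i^2<\infty$; the determined part, despite density only $1-s$, still carries enough $\ell^1$-mass \emph{locally} to absorb those windows (this is precisely where $(a_n)\notin\ell^{1}$ is used); and the blocks are short enough relative to their position that $|C\cap[1,n]|\ge sn-o(n)$ survives even inside a block. Reconciling these amounts to an elementary but delicate extraction lemma for the sequence $(|a_n|)$, sensitive to how unevenly its $\ell^1$-mass is distributed — a naive choice such as $D=m\N$ breaks down when the mass concentrates on a sparse set, and a purely deterministic cancellation of the free part reaches only $s\le\tfrac12$, so the probabilistic ingredient (conditioning plus maximal inequality) is essential. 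I would handle the sub-case $(a_n)\in\ell^2$ first as a warm-up: there the free martingale $\sum_{n\in C}\varepsilon_na_n$ converges almost surely, no blocks are needed, and it is enough to fix one interleaved $D$ of density $1-s$ with $\sum_{n\in D}|a_n|=\infty$.
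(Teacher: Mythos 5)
Your reductions are fine, and the $\ell^2$ warm-up would work, but the general case has a genuine gap, and it sits exactly in the step you defer as an ``elementary but delicate extraction lemma'': no such extraction exists in the generality required. Your mass bound needs $\sum_i\sigma_i^2/h_i^2<\infty$ (so that the product of the block-conditioning probabilities is positive), and you also impose $h_i\to 0$ --- which is indeed unavoidable, since inside block $I_i$ the free part may drift by $h_i$ and the greedy coordinates can only correct at speed $|a_n|$, so mid-block partial sums deviate from $L$ by about $h_i$. But $h_i\to 0$ together with $\sum_i\sigma_i^2/h_i^2<\infty$ forces $\sum_i\sigma_i^2=\sum_{n\in C}a_n^2<\infty$. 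Since all the entropy for the Mass Distribution Principle comes from the free coordinates, $C$ must have lower density at least $s>0$. Now take $a_n=1/\log(n+1)$: it tends to $0$, is not in $\ell^1$, and \emph{every} set of positive lower density carries infinite $\ell^2$ mass (its $k$-th element is at most $Ck$, so the squares are at least $1/\log^2(Ck)$, a divergent series). Hence for this sequence there is no admissible choice of $C$, blocks and thresholds for any $s>0$, no matter how the $\ell^1$ mass of $D$ is distributed --- the obstruction is the $\ell^2$ mass of $C$, not the concentration issue you flag. The Kolmogorov-maximal-inequality route therefore cannot be completed as stated; it genuinely requires the free part to be $\ell^2$, which is exactly the case your warm-up already covers.

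For contrast, the paper avoids any stochastic control of a ``free'' part. It uses blocks $I_j$ of fixed length $k$, and keeps a sign pattern on $I_j$ iff the block increment $s_{j+1}-s_j$ points from $s_j$ towards $L$ and has modulus at least $\max_{n\in I_j}|a_n|$. A two-bit symmetry argument (flip the sign at the index of maximal $|a_n|$ in the block, and/or flip all the other signs in the block) shows that at least $2^{k-2}$ of the $2^k$ patterns in each block comply; since $\big(\max_{n\in I_j}|a_n|\big)_j\notin\ell^1$, these constraints force $\sum_n\varepsilon_na_n=L$, with mid-block error at most $k\max_{n\in I_j}|a_n|\to 0$ --- i.e.\ the tolerance is automatically of the order of the largest term in the block rather than an externally imposed $h_i$ with a summable variance ratio. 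The uniform measure on the surviving cylinders and the Mass Distribution Principle then give $\dim\ge (k-2)/k$. If you want to salvage your scheme, replacing the probabilistic estimate by such a per-block \emph{count} of compliant patterns (equivalently, taking $h_i$ comparable to $\max_{n\in I_i}|a_n|$) is essentially forced, and at that point you have rederived the paper's argument.
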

	
\begin{proof}
We may assume that $(a_n) \not \in\ell^{1}$, otherwise every sum is convergent as we noted above. Now we prove \eqref{eq:l1}.
Fix an arbitrary $L\in \RR$ and an integer $k \geq 2$, it is enough to show that
\begin{equation*}
\dim\left\{\eps \in \Om: \sum_{n=1}^\infty  \varepsilon_n a_n =L \right\}\geq \frac{k-2}{k},
\end{equation*}
as we obtain the statement by taking the limit $k\to \infty$.

For $\eps\in \Omega$ and an integer $j\geq 0$ we define
\begin{equation*}
s_j=s_j(\eps)=\sum_{n=1}^{jk} \eps_n a_n,
\end{equation*}
and for all integers $j\geq 0$ let
\begin{equation*} I_j=\{kj+1, \dots,k(j+1)\} \quad
\textrm{and} \quad  \Omega_j=\{-1,1\}^{I_j},
\end{equation*}
and fix $m(j)\in I_j$ such that $|a_{m(j)}|=\max\{|a_n|\colon n\in I_j\}$.
Define $\Lambda\subset \Omega$ as
\begin{equation*}
\Lambda=\{\eps\in \Omega: (s_j-L)(s_{j+1}-s_j)\leq 0 \text{ and }  |s_{j+1}-s_j|\ge |a_{m(j)}| \text{ for all } j\geq 0 \}.
\end{equation*}
It is clear that we can produce some $\eps\in \Lambda\subset\Omega$, hence $\Lambda$ is nonempty.

First we prove that all $\eps\in \Lambda$ satisfies $\sum_n \eps_n a_n=L$. The definition of $\Lambda$ implies that for all $j\geq 0$ we have one of the following cases:
\begin{enumerate}[(i)]
\item $s_j - L$ and $s_{j+1} - L$ have opposite signs, that is,
\begin{equation} \label{eq:one-dimension-flip-sign-LL}
(s_j - L)(s_{j+1}-L) \leq 0
\end{equation}
and hence
\begin{equation*}
|s_{j+1}-L|\leq |s_{j+1}-s_j|= \left|\sum_{n=jk+1}^{(j+1)k} \eps_n a_n\right|\leq k\cdot |a_{m(j)}|;
\end{equation*}	
\item $s_j - L$ and $s_{j+1} - L$ have the same sign and we have:
\begin{equation} \label{eq:one-dimension-notflip-sign-LL}
|s_{j+1}-L| \leq |s_j-L|-|a_{m(j)}|\leq |s_j-L|.
\end{equation}
\end{enumerate}
Since the sequence $(a_{m(j)})$ is not in $\ell^1$, \eqref{eq:one-dimension-notflip-sign-LL} cannot be true for eventually all $j$; hence infinitely many $j\in\NN$ satisfy \eqref{eq:one-dimension-flip-sign-LL}. For these indices $(j_n)$ it follows that $s_{j_n}\to L$. On the other hand, for other index $j$ we have that $s_{j}$ is bounded by $s_{j_n}$ for some $j_n<j<j_{n+1}$
. This implies that $s_j\to L$ for all $j\ge 0$ and $\sum_{n<N}\eps_n a_n \to L$ as $$|s_{j}- \sum_{n<N}\eps_n a_n|<k\cdot |a_{m(j)}|,$$ where $jk\le N<(j+1)k$ and  $a_{m(j)}\to 0$.

Finally, it is enough to prove that $\dim \Lambda\geq \frac{k-2}{k}$. For all $j\in \NN$ define
\begin{equation*} \Lambda_j=\{\eps\in \Omega \colon (s_i-L)(s_{i+1}-s_i)\geq 0 \text{ and }  |s_{i+1}-s_i|\ge |a_{m(i)}| \text{ for all } 0\leq i< j\}.
\end{equation*}
Clearly, $\Lambda=\cap_{j=1}^{\infty} \Lambda_j$. On the other hand, for every $j\ge 0$ and $\eps\in \Lambda_j$ it follows that
\begin{equation*}
    \#\{\lambda\in \Lambda_{j+1}\colon \lambda \restriction
    (jk)=\eps\restriction  (jk)\}\geq 2^{k-2}.
\end{equation*}

Indeed, for each $\eps\restriction (jk)$ at least a quarter (that is, at least $2^{k-2}$ many) of the possible choices of $(\eps_{kj+1},\ldots,\eps_{k(j+1)})$ satisfy both conditions of $\Lambda_{j+1}$. Indeed, no matter what $(\eps_i: i\in I_j)$ are, consider changing the sign of either $\eps_{m(j)}$ or/and all signs of $(\eps_i: i\in I_j\setminus\{m(j)\})$ simultaneously. For at least one of the four options the signs clearly

satisfy both inequalities defining $\Lambda_{j+1}$.

Thus, for every $\lambda\in \Lambda_j$ and $\tau=\lambda \restriction (jk)$ the set $\{\eps\in \Lambda_{j+1}: \eps\restriction (jk)=\tau\}$ has at least $2^{k-2}$ element. Now we can define a Borel probability measure $\mu$ supported on $\Lambda$ by repeated subdivision as follows. Assume that $\lambda\in \Lambda_j$ and $\tau=\lambda \restriction (jk)$ such that $[\tau]$ has assigned measure $2^{-j(k-2)}$. Choose $2^{k-2}$ clopen sets from the
\begin{equation*} \{[\eps\restriction k(j+1)]: \eps\in \Lambda_{j+1}, \eps\restriction (jk)=\tau\},
\end{equation*}
and assign measure $2^{-(j+1)(k-2)}$ to all of them. Choose $\delta>0$ and $\tau\in \supp \mu$, and suppose that $\delta \in [2^{-(j+1)k},2^{-jk}]$ for some $j\in\NN$. Then
\begin{equation*}
\mu \left(B\big(\tau,\delta \big)\right)\leq \mu \left(B\big(\tau,2^{-jk}\big)\right)=\mu\left([\tau\restriction (jk)]\right)=2^{-j(k-2)}\leq (2^k\delta)^{\frac{k-2}{k}}=C_k \delta^{\frac{k-2}{k}}.
\end{equation*}
The Mass Distribution Principle  (Theorem \ref{thm_MDP}) implies that $\dim(\Lambda) \geq \frac{k-2}{k}$, which completes the proof.
\end{proof}

\section{Convergence in higher dimensions}\label{s3}

The main goal of this section is to prove the following theorem.
\begin{theorem}
\label{thm1hdim}
Let $d\geq 2$. For any sequence $(a_n)$ in $\RR^d$ with $\|a_n\| \to 0$ we have
\begin{equation*}
\dim \left\{ \eps \in \Om: \sum_{n=1}^\infty \eps_n a_n  \text{ is convergent} \right\}=1.
\end{equation*}
\end{theorem}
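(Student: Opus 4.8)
The plan is to adapt the one-dimensional argument of Theorem~\ref{thm1dim} by running a coordinatewise version of the same greedy/oscillation construction, but with a block length that grows with the coordinate index so that the combinatorial loss can be controlled in every direction simultaneously. First I would fix an integer $k \geq 2$ and it would suffice to build a subset $\Lambda \subset \Omega$ of convergent sign sequences with $\dim \Lambda \geq \frac{k-2d}{k}$, then let $k \to \infty$. The essential difficulty compared to dimension $1$ is that there is no single scalar quantity whose sign I can flip: the partial sums live in $\R^d$, and a block that is useful for reducing the error in one coordinate may be useless or counterproductive in another. There are two regimes to separate. If some nonzero linear functional $e$ has $\langle a_n, e\rangle \in \ell^1$, then (after a linear change of coordinates) the $e$-component of every signed sum converges automatically, and we only need to handle the remaining $d-1$ coordinates, so by induction on $d$ we may assume $\langle a_n, e\rangle \notin \ell^1$ for every $e \in \mathbb{S}^{d-1}$ — equivalently, $\sum_n \|a_n\| = \infty$ restricted to any coordinate projection diverges.

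Assuming no coordinate is in $\ell^1$, I would group the indices into consecutive blocks $I_j$, and inside each block reserve $d$ special coordinates $m_1(j), \dots, m_d(j)$, chosen greedily so that $a_{m_\ell(j)}$ has a component of size comparable to $\max_{n \in I_j}\|a_n\|$ in a direction not yet ``used up'' — concretely, pick $m_1(j)$ to maximize $\|a_n\|$, then $m_2(j)$ to maximize the length of the component of $a_n$ orthogonal to $a_{m_1(j)}$, and so on, so that $\{a_{m_1(j)}, \dots, a_{m_d(j)}\}$ spans a parallelepiped of volume bounded below by $c_d (\max_{n\in I_j}\|a_n\|)^d$, provided the block is long enough that the vectors in it genuinely span $\R^d$ (if they don't for a given $j$, one can merge finitely many consecutive blocks, using that $a_n \to 0$ but the projections are not summable). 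Then I would define $\Lambda$ by the requirement that, walking block by block, the running partial sum $s_j = \sum_{n \le jk} \eps_n a_n$ always moves toward the target (here $L = 0$ suffices, since convergence is all that is asked) in the following sense: for each $j$, the chosen signs on $I_j$ are such that $\|s_{j+1}\|$ does not grow, and if $\|s_j\|$ is larger than $d\cdot k\cdot\max_{n\in I_j}\|a_n\|$ then $\|s_{j+1}\| \le \|s_j\| - c\max_{n\in I_j}\|a_n\|$. The point is that the span property of the $d$ reserved vectors lets me, for any prescribed combination of which half-spaces I want to push into, choose the signs of $\eps_{m_1(j)}, \dots, \eps_{m_d(j)}$ (each a binary choice) plus possibly a global sign flip of the rest of the block, so that $s_{j+1}$ lands in the desired region; this costs at most $d+1$ ``constrained'' coordinates per block, leaving $\ge 2^{k-d-1}$ free choices, and in fact a more careful accounting using the volume bound gives $\ge 2^{k-2d}$ good block-words regardless of the already-fixed signs on $I_j$.

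With this in hand, the convergence proof mirrors the one-dimensional case: since no coordinate projection of $(a_{m_\ell(j)})_j$ is in $\ell^1$, the ``shrinking'' alternative \eqref{eq:one-dimension-notflip-sign-LL} cannot hold for all large $j$, so infinitely often $\|s_j\|$ is forced below $d\cdot k \cdot \max_{n \in I_j}\|a_n\| \to 0$, and in between those times $\|s_j\|$ is nonincreasing; combined with $\sup_{jk \le N < (j+1)k}\|s_j - \sum_{n<N}\eps_n a_n\| \le k\max_{n\in I_j}\|a_n\| \to 0$ this yields $\sum_n \eps_n a_n = 0$ for every $\eps \in \Lambda$. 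For the dimension bound, exactly as before, $\Lambda = \bigcap_j \Lambda_j$ where $\Lambda_j$ fixes the constraints on the first $j$ blocks, each node has at least $2^{k-2d}$ children, and the natural self-similar measure $\mu$ on $\Lambda$ satisfies $\mu(B(\tau,\delta)) \le 2^{-j(k-2d)} \le C_k \delta^{(k-2d)/k}$ for $\delta \in [2^{-(j+1)k}, 2^{-jk}]$, so the Mass Distribution Principle (Theorem~\ref{thm_MDP}) gives $\dim \Lambda \ge \frac{k-2d}{k}$, and $k \to \infty$ finishes it. The main obstacle I anticipate is making the greedy selection of the $d$ reserved vectors and the ensuing sign-correction step fully rigorous: one must verify that a block can always be arranged (possibly after bounded merging) to contain vectors spanning $\R^d$ with a quantitative volume lower bound relative to the block maximum, and that the resulting linear-algebraic ``steering'' of $s_{j+1}$ into an arbitrary octant-type region can indeed be achieved by flipping only $O(d)$ signs with the stated uniform count $2^{k-2d}$ of admissible completions — this is the place where the clean $d=1$ sign-flip trick genuinely needs a new, multidimensional idea rather than a cosmetic adjustment.
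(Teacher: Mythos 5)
Your reduction to the case where no direction is summable is fine, and your ``steering'' count (reserving $O(d)$ coordinates per block and flipping the rest globally) would indeed give dimension at least $\tfrac{k-2d}{k}$ \emph{if} the geometric input you need were available. But the key geometric claim --- that each block (possibly after merging boundedly many consecutive blocks) contains $d$ vectors spanning a parallelepiped of volume at least $c_d\,(\max_{n\in I_j}\|a_n\|)^d$ --- is false in general, and this is exactly the step you flagged as the anticipated obstacle. Take $d=2$ and
\begin{equation*}
a_n=\frac{1}{\log n}\bigl(\cos\theta_n,\sin\theta_n\bigr),\qquad \theta_n=\frac{1}{\log\log n}\quad(n\ge 3).
\end{equation*}
Then $\|a_n\|\to 0$ and no direction is summable (the component along $(0,1)$ is comparable to $1/(\log n\,\log\log n)$, whose sum diverges), so your reduction leaves this sequence untouched; yet for all indices beyond $N$ every pair $a_m,a_n$ makes an angle at most $\theta_N\to 0$, so the parallelogram they span has area $o\bigl(\max(\|a_m\|,\|a_n\|)^2\bigr)$ uniformly, no matter how many consecutive blocks you merge. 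Consequently, when the running sum $s_j$ points in a direction nearly orthogonal to the limiting direction, the total displacement available inside a block in that direction is $o(\max_{n\in I_j}\|a_n\|)$, and the dichotomy ``either $\|s_{j+1}\|\le\|s_j\|-c\max_{n\in I_j}\|a_n\|$ or $\|s_j\|$ is already below a threshold tending to $0$'' cannot be enforced. Repairing this requires handling different directions at different effective scales (and the ``slow'' subspace may itself rotate), which is a genuinely new argument, not a bounded-merging fix; note also that your threshold $d\,k\,\max_{n\in I_j}\|a_n\|$ is in any case too small relative to the possible block remainder of size $\sim k\max\|a_n\|$ to guarantee the norm decrease.

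For comparison, the paper avoids steering altogether, exploiting that only convergence --- not a prescribed sum --- is required: it quotients $\Omega$ by identifying each length-$k$ block word with its global sign flip, shows the quotient $\Theta$ has dimension $\tfrac{k-1}{k}$ by the uniform measure and the Mass Distribution Principle, and observes that the natural projection is Lipschitz-$1$ and maps the convergence set \emph{onto} $\Theta$: given any class, the block sums form a null sequence, and the vector balancing lemma of Kadets--Kadets (Lemma~\ref{finsign} via Corollary~\ref{corka}) lets one choose the single free bit per block (the global flip) so that the series converges. This loses only one bit per block, needs no spanning or volume condition, and requires no case analysis on $\ell^1$ directions; the machinery closer in spirit to your proposal (steering toward a prescribed target) is what the paper deploys only later, in Theorem~\ref{thmdhdim}, and there it needs the extra hypothesis of $d$ linearly independent L\'evy vectors --- which is precisely the kind of quantitative ``spread in all directions'' your construction implicitly assumes but which a general null sequence need not have.
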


Even the fact that for any $\|a_n\| \to 0$, there is a sequence of signs $\eps_n$ such that $\sum_{n=1}^\infty \eps_n a_n$ converges is not as immediate as in the real line case. To solve this difficulty, we need the following useful lemma, see \cite[Exercise 2.1.2.]{Ka}, where also its proof is presented.
\begin{lemma}
\label{finsign}
For any finite collection of elements $x_1, x_2, \dots, x_n \in \rr^d$, there is a collection of signs $\eps_1, \eps_2, \dots, \eps_n \in \{-1,1\}$ and an absolute constant $C=C(d)$ such that
\begin{equation*} \max_{1 \leq j \leq n} \left\| \sum\limits_{k=1}^j \eps_k x_k  \right\| \leq C \max_k \|x_k\|.
\end{equation*}
\end{lemma}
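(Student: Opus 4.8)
The plan is to prove this sign-balancing lemma (a form of the Bárány–Grinberg sign-balancing theorem) by a sequential linear-algebra rounding procedure, which yields the explicit constant $C(d)=2d$. After rescaling we may assume $\max_k \|x_k\|=1$, so the target is $\max_{1\leq j\leq n}\bigl\|\sum_{k=1}^j \eps_k x_k\bigr\|\leq 2d$. A naive greedy rule that minimizes each partial sum one step at a time only gives an $O(\sqrt{n})$ bound, so the dimension $d$ must be exploited through a global rounding mechanism rather than a pointwise one.

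First I would set up the rounding scheme. I process the indices $m=1,2,\dots,n$ in order, maintaining real coefficients $c_i\in[-1,1]$ together with a set $F\subseteq\{1,\dots,m\}$ of \emph{free} indices (those whose $c_i$ is not yet committed to $\pm 1$), subject to two invariants: (a) $|F|\leq d$, and (b) the running fractional sum vanishes, $\sum_{i=1}^m c_i x_i=0$. When index $m$ is reached I set $c_m:=0$ and mark it free, which preserves (b) trivially. If now $|F|=d+1$, the $d+1$ vectors $(x_i)_{i\in F}$ are linearly dependent in $\rr^d$, so there is $(\lambda_i)_{i\in F}\neq 0$ with $\sum_{i\in F}\lambda_i x_i=0$. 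I then replace $c_i\leftarrow c_i+t\lambda_i$ for $i\in F$ and increase $|t|$ in a suitable direction until the first coordinate reaches $\pm 1$; that coordinate leaves $F$ and is frozen forever. Since $\sum_{i\in F}\lambda_i x_i=0$, this move changes neither the total nor the fractional sum, so both invariants are restored. At the very end I round the at most $d$ still-free coordinates arbitrarily to $\pm 1$, producing $\eps\in\{-1,1\}^n$.

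Then I would extract the prefix bound. Fix $j$ and let $c^{(j)}$ be the coefficient vector right after index $j$ has been processed; invariant (b) gives $\sum_{i=1}^j c_i^{(j)} x_i=0$. Writing the final prefix sum as $\sum_{i=1}^j \eps_i x_i=\sum_{i=1}^j c_i^{(j)} x_i+\sum_{i=1}^j (\eps_i-c_i^{(j)}) x_i$, the first term is $0$. In the correction term, every index already frozen by step $j$ contributes $0$, because frozen coordinates are never touched again and hence $\eps_i=c_i^{(j)}$; each of the at most $d$ indices still free at step $j$ contributes a vector of norm $\leq |\eps_i-c_i^{(j)}|\,\|x_i\|\leq 2$. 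Therefore $\bigl\|\sum_{i=1}^j \eps_i x_i\bigr\|\leq 2d$, which is the assertion of the lemma.

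I expect the main obstacle to be exactly the control of all prefix sums rather than merely the total sum. The resolution is the pair of invariants above: forcing the fractional running sum to be identically zero makes every prefix differ from $0$ only through the at most $d$ currently free coordinates, and the fact that frozen coordinates never move again is what caps the per-prefix correction at $2d$ uniformly in $j$. Beyond this conceptual point, I would only need to check the routine feasibility of the rounding step, namely that the dependency vector $\lambda$ is nonzero and that moving along it reaches a face of the cube $[-1,1]^F$ in finite time (possibly freezing several coordinates simultaneously, which is harmless since it only decreases $|F|$).
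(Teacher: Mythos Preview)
Your argument is correct: it is precisely the B\'ar\'any--Grinberg sequential rounding proof, and it gives the explicit constant $C(d)=2d$. The invariants (a) $|F|\le d$ and (b) $\sum_{i\le m} c_i x_i=0$ are well chosen, and the key observation that frozen coordinates never move again cleanly yields the uniform prefix bound. The only points that deserve a word of care you already flag yourself (nonzero dependency vector, hitting a face of $[-1,1]^F$ in finite time, possibly freezing several coordinates at once); these are routine.

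As for comparison with the paper: the paper does not actually supply its own proof of this lemma. It quotes the statement from Kadets--Kadets \cite[Exercise~2.1.2]{Ka} and refers the reader there for the argument. So your write-up is not redundant with anything in the paper; in fact it provides a self-contained proof that the paper omits. The argument you give is essentially the one behind that exercise (and is the standard proof of this sign-balancing lemma), so there is no methodological divergence to discuss.
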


As an easy corollary, one can also obtain the following, see \cite[Exercise 1.3.6.]{Ka}. We need the following technical version.

\begin{cor}\label{corka}
Let $(a_n)$ be a sequence in $\RR^d$ with $\|a_n\|\to 0$. Then there is a sequence $\eps\in \Omega$
such that $\sum_{n=1}^\infty \eps_n a_n$ is convergent. Moreover, there is a constant $K=K(d)$ such that
\begin{equation*}
\max_{n} \left\| \sum_{i=1}^n \eps_i a_i \right\|\leq K \max_{n} \left\| a_n \right\|.
\end{equation*}
\end{cor}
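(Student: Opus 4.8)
The plan is to reduce everything to Lemma~\ref{finsign} via a block decomposition in which the size of the terms decays geometrically from block to block. First I would set $M := \max_n \|a_n\|$ (finite since $\|a_n\| \to 0$) and, using $\|a_n\|\to 0$, choose indices $0 = N_0 < N_1 < N_2 < \cdots$ so that $\|a_n\| \le M\,2^{-m}$ whenever $n > N_m$; put $B_m := \{N_m+1,\dots,N_{m+1}\}$. Then $\delta_m := \max_{n\in B_m}\|a_n\|$ satisfies $\delta_0\le M$ and $\delta_m \le M\,2^{-m}$ for $m\ge 1$, so $\sum_{m\ge 0}\delta_m \le 2M$. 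Within each block I would apply Lemma~\ref{finsign} to the finite collection $(a_n)_{n\in B_m}$ to obtain signs $(\eps_n)_{n\in B_m}$ with
\[
\max_{N_m < j \le N_{m+1}} \Bigl\| \sum_{k=N_m+1}^{j} \eps_k a_k \Bigr\| \le C\,\delta_m,
\]
where $C = C(d)$ is the constant of Lemma~\ref{finsign}, and concatenate these choices to get $\eps \in \Omega$.

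For the quantitative estimate, write $v_l := \sum_{k\in B_l}\eps_k a_k$, so $\|v_l\|\le C\delta_l$. Given $N$ with $N_m < N \le N_{m+1}$, decompose $\sum_{i=1}^{N}\eps_i a_i = \sum_{l=0}^{m-1} v_l + \sum_{k=N_m+1}^{N}\eps_k a_k$; the last sum has norm $\le C\delta_m$ by the displayed inequality, so the triangle inequality gives $\bigl\|\sum_{i=1}^N \eps_i a_i\bigr\| \le C\sum_{l=0}^{m}\delta_l \le 2CM$. This yields the asserted bound with $K(d) = 2C(d)$. Convergence follows from the same estimates: the partial block sums $S_m := \sum_{l=0}^{m-1} v_l$ are Cauchy because $\sum_l\|v_l\| \le C\sum_l\delta_l < \infty$, hence $S_m \to S$ for some $S\in\R^d$, and the residual term $\sum_{k=N_m+1}^{N}\eps_k a_k$ has norm $\le C\delta_m \to 0$, so $\sum_{i=1}^N \eps_i a_i \to S$ as $N\to\infty$.

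The one point that requires care — and the reason the existence of a convergent sign choice is, as the text notes, not as immediate as in dimension one — is that Lemma~\ref{finsign} only controls partial sums \emph{relative to the start of a block}, not the global running sum, so naively stringing blocks together could let the running sum drift off. Choosing the block boundaries so that $\|a_n\|$ decays geometrically across blocks is exactly what makes the block totals $v_l$ absolutely summable, which simultaneously prevents drift (bounding the running sum by the geometric series $2M$) and delivers convergence. I expect this choice of decomposition to be the only nontrivial ingredient; the rest is the triangle inequality.
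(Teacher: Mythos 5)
Your proposal is correct and follows essentially the same route as the paper: decompose $\N$ into blocks on which $\|a_n\|\le M2^{-m}$, apply Lemma~\ref{finsign} block by block, and sum the resulting geometric bounds to get both the Cauchy property and the constant $K=2C$. The paper's proof is the same argument with the blocks written as $Q_m=\{q_m,\dots,q_{m+1}-1\}$, $q_m$ minimal with $\|a_n\|\le M2^{-m}$ for $n\ge q_m$.
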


\begin{proof}
Let $M=\max_{n} \left\| a_n \right\|$ and for all integers $m \geq 0$ let $q_m$ be the smallest positive integer such that $\|a_n\|\leq M2^{-m}$ for all $n \geq q_m$, and let $Q_m=\{q_m,\dots, q_{m+1}-1\}$.
Using  Lemma \ref{finsign} in each block $Q_m$ separately, we obtain a sequence of signs $\eps_n$ and a constant $C=C(d)$ such that for all $m \geq 1$ we have
\begin{equation*}
\max_{1 \leq j \leq q_{m+1}-1} \left\| \sum\limits_{k=q_m}^j \eps_k a_k  \right\| \leq C \max_{k \in Q_m} \|a_k\| \leq CM2^{-m}.
\end{equation*}
Hence it is easy to see that the sequence $(\sum_{k=1}^n \eps_k a_k)_n$ is Cauchy, and
$\sum_{n=1}^\infty \eps_n a_n$ converges. Moreover, let $K=2C$, then we obtain
\begin{equation*}
\max_{n} \left\| \sum_{i=1}^n \eps_i a_i \right\|\leq 2CM= K\max_{n} \left\| a_n \right\|. \qedhere
\end{equation*}
\end{proof}

Now we are ready to prove Theorem~\ref{thm1hdim}.
\begin{proof}[Proof of Theorem~\ref{thm1hdim}]
Define
\begin{equation*}
\Lambda=\left\{\eps\in \Omega: \sum_{n=1}^{\infty} \eps_na_n \textrm{ converges}\right\}
\end{equation*}
and fix an arbitrary $k\in \NN$. It is enough to prove that
\begin{equation} \label{eq:k-1}
\dim \Lambda \geq \frac{k-1}{k}.
\end{equation}
Our strategy will be to project $\Lambda$ onto a $\frac{k-1}{k}$-dimensional compact metric space $\Theta=\Theta(k)$ defined as follows. For any integer $j\ge 0$ define $I_j$ as
\begin{equation*}
I_j=\{kj+1, \dots, k(j+1)\}.
\end{equation*}
Define the equivalence relation $\sim$ on $\Omega$ such that $\eps\sim \omega$ if and only if
\begin{equation*}
\eps \restriction I_j=\pm \left(\omega \restriction I_j\right) \text{ for all integers } j\geq 0.
\end{equation*}
Let $\Theta=\Omega/\sim$ be the quotient set of $\Omega$ given by the equivalence relation $\sim$, and let $\pi \colon \Omega\to \Theta$ be the natural projection. For $\tau,\rho\in \{-1,1\}^k$ we write $\tau \equiv \rho$ if $\tau=\pm \rho$. Clearly, $\equiv$ is an equivalence relation and let $\mathcal{T}=\{-1,1\}^k/\equiv$ be the quotient set. Then we have
\begin{equation*}
\Theta=\mathcal{T}^{\NN},
\end{equation*}
and define the metric $\varrho$ on $\Theta$ such that if $\theta,\theta'\in \Theta$ with $\theta=(\tau_1,\tau_2,\dots)$ and $\theta'=(\tau'_1,\tau'_2,\dots)$ then
\begin{equation*}
\varrho(\theta,\theta')=2^{-k\min\left\{j: \tau_j\neq \tau'_j\right\}}.
\end{equation*}
Then it is easy to see that Corollary~\ref{corka} implies that $\pi(\Lambda)=\Theta$, and $\pi$ is Lipschitz-$1$. As Lipschitz maps cannot increase the Hausdorff dimension, it is enough to prove that $\dim \Theta\geq \frac{k-1}{k}$. We can define a Borel probability measure $\mu$ on $\Theta$ by repeated subdivision such that for any integer $j\geq 1$ and $\tau_1,\dots, \tau_j \in \mathcal{T}$ for $\chi=\tau_1\dots \tau_j\in \mathcal{T}^{j}$ we have
\begin{equation*}
\mu([\chi])=2^{-j(k-1)};
\end{equation*}
indeed, note that the number of such words $\chi$ is $2^{j(k-1)}$, so $\mu$ is actually the uniform distribution on $\Theta$.

Fix an arbitrary $0<r<2^{-k}$ and let $\theta\in \Theta$. Suppose that $r \in [2^{-(j+1)k},2^{-jk}]$ for some integer $j\geq 1$. Then
\begin{equation*}
\mu \left(B\big(\theta,r \big)\right)\leq \mu \left(B\big(\theta,2^{-jk}\big)\right)=\mu\left([\theta\restriction (jk)]\right)=2^{-j(k-1)}\leq (2^kr)^{\frac{k-1}{k}}=C_k r^{\frac{k-1}{k}},
\end{equation*}
where $C_k=2^{k-1}$. Therefore, the Mass Distribution Principle (Theorem \ref{thm_MDP}) implies that $\dim(\Lambda) \geq \frac{k-1}{k}$. This completes the proof.
\end{proof}

\section{Signed sums in higher dimensions} \label{s:sums}

In this section, we turn our attention to the higher-dimensional setting and examine the extent to which the phenomena observed in one dimension persist in $\mathbb{R}^d$ for $d \geq 2$. Although Theorem~\ref{thm1dim} provides a complete characterization in the one-dimensional case, its direct generalization fails in higher dimensions. This motivates a deeper analysis of the summability behavior of null sequences and their associated sum sets in $\mathbb{R}^d$.

A natural starting point is the following fundamental question.

\begin{question}
What are the null sequences $(a_n)$ in $\RR^d$ (that is, $\|a_n\| \to 0$) such that $S(a_n)=\RR^d$?
\end{question}

A basic necessary condition for this to hold is that the sequence must remain non-summable in every direction, that is,
\begin{equation}
\sum_{n=1}^\infty |\langle a_n, e \rangle | = \infty \quad \text{for all } e \in \mathbb{S}^{d-1},
\tag{LS}\label{LS}
\end{equation}
where $\langle\cdot,\cdot\rangle$ denotes the standard scalar product on $\RR^d$. In other words, the sum of the length of any projection of $(a_n)$ is not in $\ell_1$, otherwise we clearly cannot reach all elements of $\RR^d$. This condition is also central in the classical L\'evy-Steinitz theorem, which characterizes the set of sums of rearranged series in finite-dimensional spaces:

\begin{theorem}[L\'evy-Steinitz theorem]\label{thmLS}
Let $\sum_{n=1}^\infty  a_n$ be a conditionally convergent series in $\RR^d$. Then $(a_n)$ satisfies \eqref{LS} if and only if
\[
\left\{\sum_{n=1}^\infty  a_{\pi(n)} \mid \pi\colon  \NN \to \NN \text{ is a permutation}\right\} = \RR^d.
\]
\end{theorem}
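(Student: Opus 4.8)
\textbf{Proof plan for the Lévy–Steinitz theorem (Theorem~\ref{thmLS}).}

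The plan is to prove both directions, the easy necessity and the substantial sufficiency. For necessity, suppose $(a_n)$ fails \eqref{LS}, so there is a direction $e\in\mathbb S^{d-1}$ with $\sum_n|\langle a_n,e\rangle|<\infty$. Then for any permutation $\pi$, the real series $\sum_n\langle a_{\pi(n)},e\rangle$ is absolutely convergent, hence every rearrangement has the \emph{same} sum $s:=\sum_n\langle a_n,e\rangle$ (which is finite because the original series converges). Thus every rearrangement sum $x$ satisfies $\langle x,e\rangle=s$, so the set of rearrangement sums is contained in the affine hyperplane $\{x:\langle x,e\rangle=s\}\ne\RR^d$. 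This gives the contrapositive of the ``only if'' direction.

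For sufficiency, assume $\sum_n a_n$ is conditionally convergent and \eqref{LS} holds; I must show every point of $\RR^d$ is a rearrangement sum. The key classical ingredients are: (i) the \emph{Rounding Lemma} (a.k.a.\ Steinitz's lemma / the polygonal rearrangement lemma): given finitely many vectors $v_1,\dots,v_m\in\RR^d$ with $\sum v_i=0$ and $\|v_i\|\le\varepsilon$, one can reorder them so that every partial sum has norm at most $C(d)\,\varepsilon$, where $C(d)\le d$ is a dimensional constant; and (ii) the fact, which is a reformulation of \eqref{LS} together with conditional convergence, that the set of limits of \emph{subsums} $\{\sum_{n\in A}a_n : A\subseteq\NN\text{ with the series restricted to }A\text{ convergent}\}$, or more precisely the set of ``accessible directions'' in which one can push partial sums by a bounded amount using not-yet-used terms, is all of $\RR^d$. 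The standard route is: fix a target $L\in\RR^d$; build the permutation in stages; at stage $j$ use \eqref{LS} and the conditional convergence to select a finite initial segment of the yet-unused terms whose partial sums first steer the running sum close to $L$ along coordinate directions, then apply the Rounding Lemma to the remaining terms of that finite block (after adjoining a short correcting block, using that partial sums of a conditionally convergent series are dense enough in the relevant directions) so that throughout the block the running sum stays within $C(d)\varepsilon_j$ of $L$; since $a_n\to 0$ we may take $\varepsilon_j\to 0$, and every term is eventually used, so the rearranged series converges to $L$. A clean way to organize (ii) is via Lévy vectors / the ``functional'' version: for each $e$, $\sum\langle a_n,e\rangle$ is conditionally convergent, so its partial sums along positive terms and along negative terms both diverge, giving enough freedom to move in direction $\pm e$; doing this for a spanning set of directions and interleaving with Rounding-Lemma steps yields the result.

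The main obstacle is the Rounding Lemma and its careful bookkeeping: one must ensure that after each ``steering'' block the \emph{remaining} vectors in that block still sum to something small and can be reordered to keep partial sums controlled, which requires pairing the steering step with a compensating step and tracking that the error constant stays $O(d)\cdot\sup\|a_n\|$ over the tail. Controlling the global exhaustion of indices (no term left behind) simultaneously with the per-block norm bounds is the delicate part; everything else is routine once the Rounding Lemma is available. Since this is the classical Lévy–Steinitz theorem, in the paper itself one would most likely just cite a standard reference (e.g., Kadets–Kadets or Rosenthal's survey) rather than reproduce the full argument.
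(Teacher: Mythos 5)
The paper does not prove this statement at all: it is quoted as the classical L\'evy--Steinitz theorem and used as a black box (in Proposition~\ref{propcex1}), with the standard sources \cite{Ka} and \cite{Ha} in the bibliography. So your closing remark -- that in the paper one would simply cite a reference -- is exactly what the authors do, and there is no proof in the paper to compare your route against. Judged on its own terms, your necessity direction is complete and correct: if $\sum_n|\langle a_n,e\rangle|<\infty$ for some $e\in\mathbb{S}^{d-1}$, then the $e$-component of every convergent rearrangement sum is rearrangement-invariant, so the sum range lies in a proper affine hyperplane.

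The sufficiency sketch, however, has a genuine gap at its heart, namely your ingredient (ii). The Steinitz rounding lemma is standard and your constant $C(d)\le d$ is fine, but the claim that \eqref{LS} plus conditional convergence gives ``enough freedom to steer the running sum toward $L$,'' justified direction by direction via the conditional convergence of each projection $\sum_n\langle a_n,e\rangle$, is not a proof: selecting terms with a prescribed sign of their $e$-component moves you in direction $e$ but gives no control whatsoever over the orthogonal components of those same terms, and this interaction between directions is precisely what makes the theorem nontrivial. Indeed, the same per-direction heuristic would ``prove'' that \eqref{LS} forces the signed-sum set $S(a_n)$ (or the subsum achievement set) to be all of $\RR^d$, which the paper's Example~\ref{propcex2} (after \cite{BGM}) shows to be false; so any correct argument for the rearrangement case must use something beyond directional divergence. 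The standard proofs close exactly this gap with a separate substantial lemma -- that every vector in the orthogonal complement of $\Gamma=\{f:\sum_n|f(a_n)|<\infty\}$ can be approximated by finite subsums of every tail of the series -- established by induction on dimension or a convexity/duality argument, and only then combined with the rounding lemma and the exhaustion bookkeeping you describe. As written, your step (ii) asserts the conclusion of that lemma rather than proving it, so the proposal is an accurate roadmap of the classical argument but not a proof of its hard half.
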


While it may be tempting to conjecture that the divergence condition \eqref{LS} also guarantees that the set of all subseries sums coincides with $\RR^d$, this turns out to be false. As shown in \cite{BGM}, a more nuanced picture emerges when considering the \emph{achievement set} of a conditionally convergent series, defined as
\[
A(a_n) = \left\{\sum_{n=1}^\infty \varepsilon_n a_n : \varepsilon_n \in \{0,1\}^\NN \right\}.
\]
Although condition \eqref{LS} implies that $A(a_n)$ is dense in $\RR^d$ (see \cite[Lemma 3.2]{BGM}), it is insufficient to ensure that $A(a_n) = \RR^d$. The remainder of this section explores this discrepancy and presents partial positive results in this higher-dimensional context.

\begin{prop}\label{propcex1}
If \eqref{LS} holds, then $S(a_n)$ is dense in $\RR^d$.
\end{prop}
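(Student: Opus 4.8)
The plan is to derive the density of $S(a_n)$ from the classical L\'evy--Steinitz theorem (Theorem~\ref{thmLS}), with Corollary~\ref{corka} used twice: once to manufacture a conditionally convergent reference series out of $(a_n)$, and once to fill in the indices not touched by the approximation step. The starting observation is that \eqref{LS} forces $(a_n)\notin\ell^1$, since for a fixed $e\in\mathbb{S}^{d-1}$ we have $\sum_n\|a_n\|\ge\sum_n|\langle a_n,e\rangle|=\infty$. Fix, via Corollary~\ref{corka}, some $\omega\in\Omega$ with $\sum_n\omega_na_n$ convergent, and set $\tilde a_n:=\omega_na_n$; then $\sum_n\tilde a_n$ is \emph{conditionally} convergent, and $(\tilde a_n)$ again satisfies \eqref{LS} because $|\langle\tilde a_n,e\rangle|=|\langle a_n,e\rangle|$ for every $e$. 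Both properties survive the deletion of a finite initial block, so every tail $(\tilde a_n)_{n>M}$ is still a conditionally convergent series satisfying \eqref{LS}; this is the only use of \eqref{LS} outside Theorem~\ref{thmLS} itself.

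Now fix a target $y\in\mathbb{R}^d$ and $\eta>0$, and let $K=K(d)$ be the constant from Corollary~\ref{corka}. First I would choose $M$ so large that $\sup_{n>M}\|a_n\|<\eta/(3K)$, and put $s_M:=\sum_{n=1}^M\tilde a_n$. Applying Theorem~\ref{thmLS} to the tail $\sum_{n>M}\tilde a_n$, the set of its rearrangement sums is all of $\mathbb{R}^d$, so there is a bijection $\pi$ of $\{M+1,M+2,\dots\}$ with $\sum_{j>M}\tilde a_{\pi(j)}=y-s_M$. Convergence of this rearranged series yields an $N>M$ for which the finite set $F:=\pi(\{M+1,\dots,N\})\subset\{M+1,M+2,\dots\}$ satisfies $\bigl\|\sum_{n\in F}\tilde a_n-(y-s_M)\bigr\|<\eta/3$. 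Setting $G:=\{1,\dots,M\}\cup F$ (a finite set, with $\{1,\dots,M\}$ and $F$ disjoint) and recalling $\tilde a_n=\omega_na_n$, we obtain $\bigl\|\sum_{n\in G}\omega_na_n-y\bigr\|<\eta/3$: a genuine finite signed subsum of $(a_n)$ lands within $\eta/3$ of $y$.

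It remains to promote this finite subsum to a point of $S(a_n)$ by assigning $\pm1$ to the indices in $\NN\setminus G$ without moving the sum by more than $2\eta/3$. Here the choice $G\supseteq\{1,\dots,M\}$ is what makes things work: $\NN\setminus G\subseteq\{M+1,M+2,\dots\}$, hence $\sup_{n\notin G}\|a_n\|\le\sup_{n>M}\|a_n\|<\eta/(3K)$. Applying Corollary~\ref{corka} to the null sequence $(a_n)_{n\notin G}$ (reindexed) gives signs $(\eps_n)_{n\notin G}$ with $\sum_{n\notin G}\eps_na_n$ convergent and $\bigl\|\sum_{n\notin G}\eps_na_n\bigr\|\le K\sup_{n\notin G}\|a_n\|<\eta/3$. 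Define $\eps\in\Omega$ by $\eps_n:=\omega_n$ for $n\in G$ and by the above choice for $n\notin G$; then $\sum_n\eps_na_n$ converges — for $N'\ge\max G$ its partial sum equals $\sum_{n\in G}\omega_na_n$ plus the $N'$-th partial sum of $\sum_{n\notin G}\eps_na_n$ — to some $x$, and
\[
\|x-y\|\ \le\ \Bigl\|\sum_{n\in G}\omega_na_n-y\Bigr\|+\Bigl\|\sum_{n\notin G}\eps_na_n\Bigr\|\ <\ \tfrac{\eta}{3}+\tfrac{\eta}{3}\ <\ \eta .
\]
Thus $x\in S(a_n)$ lies within $\eta$ of $y$; since $y$ and $\eta$ were arbitrary, $S(a_n)$ is dense in $\mathbb{R}^d$.

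I expect the main subtlety to be precisely the bookkeeping in the last two steps: one must guarantee that after the L\'evy--Steinitz approximation all still-unsigned indices carry small vectors, which is exactly why one peels off the fixed prefix $\{1,\dots,M\}$ first and then applies Theorem~\ref{thmLS} only to the tail $(\tilde a_n)_{n>M}$ rather than to the whole sequence. It is also worth noting why one does not attempt a direct proof by steering the partial sums toward $y$ in finite blocks: \eqref{LS} does \emph{not} let one extract a subsequence of terms clustered near a prescribed direction with negligible mass in the orthogonal directions (there are sequences satisfying \eqref{LS} for which every such ``cluster'' still has infinite orthogonal mass), so the directional bookkeeping of a block argument is awkward; routing everything through L\'evy--Steinitz, which already absorbs this difficulty, keeps the proof short.
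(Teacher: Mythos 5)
Your argument is correct, and its skeleton is the same as the paper's: approximate the target by a finite signed subsum obtained from the L\'evy--Steinitz theorem applied to a tail, then use Corollary~\ref{corka} to assign signs to the remaining (uniformly small) terms so that their contribution stays below the error budget. The one genuine difference is your preliminary step of fixing $\omega\in\Omega$ with $\sum_n\omega_na_n$ convergent and working with $\tilde a_n=\omega_na_n$ before invoking Theorem~\ref{thmLS}; the paper instead applies Theorem~\ref{thmLS} directly to $(a_n)_{n\ge N}$, asserting that \eqref{LS} ``automatically'' gives conditional convergence. That assertion is not literally true (e.g.\ in $\mathbb{R}^2$ take $a_n=\tfrac{1}{\log(n+1)}e_1$ for $n$ odd and $a_n=\tfrac{1}{\log(n+1)}e_2$ for $n$ even: \eqref{LS} holds, yet no rearrangement of $\sum a_n$ converges), so the paper's hypothesis for Theorem~\ref{thmLS} needs exactly the kind of repair you supply; since $|\langle\tilde a_n,e\rangle|=|\langle a_n,e\rangle|$, the condition \eqref{LS} is sign-invariant and your $\tilde a_n$ is genuinely conditionally convergent. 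In short: same route, but your extra use of Corollary~\ref{corka} makes the appeal to L\'evy--Steinitz fully rigorous, at the harmless cost of carrying the fixed signs $\omega_n$ on the finite set $G$ instead of the paper's all-$+1$ choice.
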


\begin{proof}
Suppose that $(a_n)$ satisfies \eqref{LS}.
Let $x\in \R^d$ and $\delta>0$ be fixed, we show that there exists an $\eps\in \Om$ such that $\| x-\sum_{n=1}^{\infty}\eps_n a_n\|<\delta$. Let $N>0$ such that $\max_n \|a_n\|<\frac{\delta}{2K}$ for every $n\geq N$, where $K=K(d)$ is the absolute constant given by Corollary \ref{corka}.
Let $s=\sum_{i=1}^{N-1} a_i$. Now it is enough to show that there are signs
$(\eps_n)_{n\geq N}$ such that
\begin{equation}
\label{eq:bigN}
\left\|(x-s)-\sum_{n=N}^{\infty}\eps_n a_n\right\|<\delta.
\end{equation}
Indeed, \eqref{eq:bigN} yields that $\| x-\sum_{n=1}^{\infty}\eps_n a_n\|<\delta$ if $\eps_i=1$ for all $i\in \{1,\dots,N-1\}$.

First note that if $(a_n)_{n\geq 1}$ satisfies \eqref{LS}, then $(a_n)_{n\geq N}$  also satisfies it. Clearly, every sequence that satisfies \eqref{LS} is automatically conditionally convergent. Let $\NN_{\geq N}=[N,\infty)\cap \NN$, by Theorem \ref{thmLS} we obtain
\begin{equation*} \left\{\sum_{n=N}^\infty  a_{\pi(n)}: \pi\colon \NN_{\geq N} \to \NN_{\geq N} \text{ is a permutation}\right\}=\RR^d.
\end{equation*}
Thus there is a permutation $\pi\colon  \NN_{\geq N} \to \NN_{\geq N}$ such that $\sum_{n=N}^\infty  a_{\pi(n)}=x-s$. Hence there is a set $H\subset \NN_{\geq N}$ such that
\begin{equation} \label{eq:extraN}
\left\|(x-s)-\sum_{n\in H} a_n\right\|<\frac{\delta}{2}.
\end{equation}
Let $H'=\NN_{\geq N}\setminus H$ and we apply Corollary \ref{corka} to $(a_n)_{n\in H'}$.
Since $\|a_n\|<\frac{\delta}{2K}$ for $n\geq N$, we can find signs $\eps_{n}$ for $n\in H'$ such that
\begin{equation} \label{eq:H'}
\left\|\sum_{n\in H'} \eps_n a_n\right|\leq K\cdot \frac{\delta}{2K}=\frac{\delta}{2}.
\end{equation}
Let $\eps_n=1$ if $n\in H$, so $\eps_n$ is defined for all $n\geq N$. Then \eqref{eq:extraN} and \eqref{eq:H'} imply
\begin{equation*}
\left\|(x-s)-\sum_{n=N}^{\infty} \eps_n a_n\right\|\leq \left\|(x-s)-\sum_{n\in H} a_n\right\|+\left\|\sum_{n\in H'}\eps_n a_n\right\|<\frac{\delta}{2}+\frac{\delta}{2}=\delta.
\end{equation*}
Thus \eqref{eq:bigN} holds, and the proof is complete.
\end{proof}

On the other hand, adopting the construction in \cite[Example~3.13.]{BGM} we have the following.
\begin{example}\label{propcex2}
There is a null sequence $(a_n)$ in $\RR^2$ satisfying \eqref{LS} and $S(a_n) \neq \RR^2$.
\end{example}

\begin{proof}
Let $n_0=0$ and define the increasing sequence $n_{k+1}=n_k+2^{10^{k^{2}}+1}$.
Let $a_n=\left(\frac{(-1)^n}{2^{10^{k^2}}},\frac{(-1)^n}{2^k}\right)$, where $k$ is the only positive integer for which $n\in (n_{k-1}, n_k]$. A straightforward adaptation of the proof \cite[Example~3.13.]{BGM} yields that \eqref{LS} holds for $(a_n)$ and $S(a_n)\subset L \times \RR$, where $L$ denotes the set of Liouville numbers.
\end{proof}

The following definition turns out to be useful.

\begin{defi}
Let $(a_n)$ be a sequence in $\RR^d$ tending to $0$.  
We say that $u \in \mathbb{S}^{d-1}$ is a \emph{L\'evy vector} of $(a_n)$ if for every $\varepsilon>0$ we have
\begin{equation*}
   \sum_{n=1}^{\infty} \|a_n\|\,
   \chi\!\left(\frac{a_n}{\|a_n\|} \in B(u,\varepsilon)\right)
   = \infty,
\end{equation*}
where $\chi(E)$ denotes the indicator function of a condition $E$ (i.e., $\chi(E)=1$ if $E$ holds and $\chi(E)=0$ otherwise).
\end{defi}

The following fact has been claimed by Halperin \cite[$\S$~3.3]{Ha} without proof, see also \cite[Proposition 2.2]{GM}. For the readers' convenience, we include an easy proof.
\begin{prop}
\label{levyfact}
Let $(a_n)$ be a null sequence in $\RR^d$, then the following are equivalent:
\begin{enumerate}[(i)]
\item \label{i:Levy1} $u$ is a L\'evy vector of the sequence $(a_n)$;
\item \label{i:Levy2} there exist an index set $I\subset \NN$ sequences $(\alpha_{n})$ in $\RR^{+}$ and $(\omega_{n})$ in $\RR^d$ such that for all $n\in I$ we have
\begin{equation*}
a_{n}=\al_{n} u + \omega_{n} \text{ with }  \sum_{n\in I} \al_{n} = \infty   \text{ and } \sum_{n\in I} \|\om_{n}\| < \infty.
\end{equation*}
\end{enumerate}
\end{prop}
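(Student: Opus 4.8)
The plan is to prove the two implications separately, after two harmless normalizations. Since $B(u,\varepsilon)\subseteq B(u,\varepsilon')$ whenever $\varepsilon\le\varepsilon'$, it suffices to verify the divergence in (i) for all small $\varepsilon$, say $\varepsilon\in(0,1)$; and since in (ii) the decomposition is only required on $I$, one may freely delete finitely many indices from $I$. The whole argument will rest on the orthogonal decomposition $a_n=\langle a_n,u\rangle u+\omega_n$ with $\omega_n\perp u$, together with one elementary estimate that I will isolate first: if $a_n\ne 0$ and $w:=a_n/\norm{a_n}$ satisfies $\norm{w-u}<\varepsilon\le 1$, then from $\norm{w-u}^2=2-2\langle w,u\rangle$ one gets $\langle w,u\rangle>1-\varepsilon^2/2\ge\tfrac12$, hence $\langle a_n,u\rangle>\tfrac12\norm{a_n}>0$ and $\norm{\omega_n}^2=\norm{a_n}^2(1-\langle w,u\rangle^2)<\varepsilon^2\norm{a_n}^2$; conversely, if $a_n=\alpha_n u+\omega_n$ with $\alpha_n>0$, then $\bigl|\norm{a_n}-\alpha_n\bigr|\le\norm{\omega_n}$ and $\norm{w-u}\le 2\norm{\omega_n}/\norm{a_n}$.

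For (ii)$\Rightarrow$(i) I would fix $\varepsilon\in(0,1)$ and split $I=I_1\cup I_2$ with $I_1=\{n\in I:\norm{\omega_n}<\tfrac{\varepsilon}{4}\alpha_n\}$. On $I_2$ one has $\alpha_n\le\tfrac{4}{\varepsilon}\norm{\omega_n}$, so $\sum_{n\in I_2}\alpha_n\le\tfrac{4}{\varepsilon}\sum_{n\in I}\norm{\omega_n}<\infty$; combined with $\sum_{n\in I}\alpha_n=\infty$ this forces $\sum_{n\in I_1}\alpha_n=\infty$. For $n\in I_1$ the estimate above gives $\norm{a_n}\ge\alpha_n-\norm{\omega_n}\ge\tfrac34\alpha_n>0$ and $\norm{a_n/\norm{a_n}-u}\le 2\norm{\omega_n}/\norm{a_n}<\tfrac23\varepsilon<\varepsilon$, so $a_n/\norm{a_n}\in B(u,\varepsilon)$, and therefore
\[
\sum_{n=1}^{\infty}\norm{a_n}\,\chi\!\left(\frac{a_n}{\norm{a_n}}\in B(u,\varepsilon)\right)\ \ge\ \sum_{n\in I_1}\norm{a_n}\ \ge\ \tfrac12\sum_{n\in I_1}\alpha_n\ =\ \infty .
\]
Since $\varepsilon$ was arbitrary, $u$ is a L\'evy vector.

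For (i)$\Rightarrow$(ii) I would run an exhaustion. Set $J_k=\{n\in\NN: a_n\ne 0,\ a_n/\norm{a_n}\in B(u,2^{-k})\}$; by hypothesis $\sum_{n\in J_k}\norm{a_n}=\infty$ for every $k$, and since $(\norm{a_n})$ is bounded, each $J_k$ is infinite and stays so (with divergent $\norm{a_n}$-sum) after deleting any finite set of indices. Pick $N_0$ with $\norm{a_n}<1$ for $n>N_0$ and put $\max F_0:=N_0$; then, inductively, given $F_1,\dots,F_{k-1}$, enumerate $J_k\cap(\max F_{k-1},\infty)$ increasingly and let $F_k$ be its shortest initial segment with $\sum_{n\in F_k}\norm{a_n}\ge 1$ --- since every term is $<1$, this block satisfies $1\le\sum_{n\in F_k}\norm{a_n}<2$. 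The $F_k$ are pairwise disjoint finite sets; set $I=\bigcup_k F_k$ and, for $n\in I$, $\alpha_n=\langle a_n,u\rangle$, $\omega_n=a_n-\alpha_n u$. For $n\in F_k\subseteq J_k$ the elementary estimate (with $\varepsilon=2^{-k}\le\tfrac12$) gives $\alpha_n>\tfrac12\norm{a_n}>0$ and $\norm{\omega_n}<2^{-k}\norm{a_n}$, so
\[
\sum_{n\in I}\alpha_n\ \ge\ \sum_{k\ge 1}\tfrac12\sum_{n\in F_k}\norm{a_n}\ \ge\ \sum_{k\ge 1}\tfrac12\ =\ \infty
\qquad\text{and}\qquad
\sum_{n\in I}\norm{\omega_n}\ <\ \sum_{k\ge 1}2^{-k}\sum_{n\in F_k}\norm{a_n}\ <\ \sum_{k\ge 1}2^{-k+1}\ =\ 2 ,
\]
which is exactly (ii).

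Everything above is elementary; the single point that requires an idea is the balancing in (i)$\Rightarrow$(ii), where one must keep the radial parts $\alpha_n$ summing to $\infty$ while simultaneously forcing the transversal parts $\omega_n$ to be summable. The shrinking cones $B(u,2^{-k})$ accomplish this, because within the $k$-th block the transversal mass is at most $2^{-k}$ times the radial mass; choosing a \emph{minimal} block of radial mass $\ge 1$ (hence $<2$) makes the transversal masses a convergent geometric series while the radial masses still diverge. The degenerate terms $a_n=0$ cause no trouble, as they belong to no $J_k$ and hence to no $F_k$.
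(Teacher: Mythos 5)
Your proof is correct and follows essentially the same route as the paper: for (ii)$\Rightarrow$(i) the same splitting of $I$ according to whether $\norm{\omega_n}$ is small compared to $\alpha_n$, and for (i)$\Rightarrow$(ii) the same greedy construction of disjoint blocks inside the shrinking cones $B(u,2^{-k})$ with block mass between $1$ and $2$, making the transversal parts a convergent geometric series. The only differences are cosmetic (you measure block mass by $\norm{a_n}$ instead of $\alpha_n$, and you spell out the comparison $\norm{a_n}\ge\alpha_n-\norm{\omega_n}$, which the paper leaves implicit).
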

\begin{proof}
First we show that \eqref{i:Levy2} implies \eqref{i:Levy1}. Let $\gamma>0$ be arbitrarily given. For all $n\in I$ with $\delta:=\gamma/2$ we obtain by an easy calculation that
\begin{equation} \label{eq:norm1}
 \|\omega_{n}\|\leq \delta\alpha_{n}  \quad \text{implies} \quad \frac{a_n}{\|a_n\|}\in B(u,\gamma).
\end{equation}
Since
\begin{equation*}
\sum_{n\in I} \alpha_{n} \chi(\delta\alpha_{n}  \leq\|\omega_{n}\|)\leq \frac{1}{\delta} \sum_{n\in I} \| \omega_{n}\|<\infty,
\end{equation*}
our assumption ${\displaystyle \sum_{n\in I} \al_{n} = \infty}$ yields that
\begin{equation} \label{eq:norm2}
\sum_{n\in I} \alpha_{n} \chi(\delta \alpha_{n} >\|\omega_{n}\|)=\infty.
\end{equation}
Then \eqref{eq:norm1} and \eqref{eq:norm2} imply that
\begin{equation*}
\sum_{n=1}^{\infty} \|a_{n}\|\chi\left(\frac{a_{n}}{\|a_{n}\|} \in B(u,\gamma)\right)\geq \sum_{n\in I} \alpha_{n} \chi(\delta \alpha_{n}  >\|\omega_{n}\|)=\infty,
\end{equation*}
which implies that $u$ is a L\'evy vector of the sequence $(a_n)$.

Now we show that \eqref{i:Levy1} yields \eqref{i:Levy2}. We may assume without loss of generality that $\frac{a_n}{\|a_n\|}\in B(u,1)$ for all $n$. Therefore, we can uniquely write
\begin{equation*} a_n=\alpha_n u+\omega_n \text{ such that } \alpha_n>0 \text{ and } \omega_n \perp u \text{ for all } n\geq 1.
\end{equation*}
We may also assume $\|a_n\|\leq 1$ for all $n$.
Let $\gamma\leq 1/2$ and $n$ be arbitrary, we show that
\begin{equation} \label{eq:norm3}
\frac{a_n}{\|a_n\|}\in B(u,\gamma)  \quad \text{implies} \quad  \|\omega_{n}\|\leq 2\gamma \alpha_{n}.
\end{equation}
Indeed, our assumption yields that $\|\omega_n\|\leq \gamma \|a_n\|$, so
$\|\omega_n\|^2\leq \gamma^2 (\alpha_n^2+\|\omega_n\|^2)$, which implies that
\begin{equation*}
\|\omega_n\|\leq \frac{\gamma}{\sqrt{1-\gamma^2}}\alpha_n\leq 2\gamma \alpha_n,
\end{equation*}
so \eqref{eq:norm3} holds. We will define subsets $I_0,I_1,\dots \subset \NN$ such that
\begin{enumerate}[(1)]
\item \label{ii1} $I_0=\emptyset$,
\item \label{ii2} $\max I_{m-1}<\min I_{m}$ for all $m\geq 1$,
\item \label{ii3} $1\leq \sum_{n\in I_m} \alpha_n \leq 2$ for all $m\geq 1$,
\item \label{ii4} $\|\omega_n\|\leq 2^{1-m}\alpha_n$ for all $n\in I_m$ and $m\geq 1$.
\end{enumerate}
Let $I_0=\emptyset$, so \eqref{ii1} is satisfied. Assume by induction that $I_0,\dots,I_{m-1}$ are already defined. Choose $\gamma_{m}=2^{-m}$. As $u$ is a Levy vector of the sequence $(a_n)$, by definition
\begin{equation*} \sum_{n=1}^{\infty} \|a_n\| \chi\left(\frac{a_n}{\|a_n\|}\in B(u,\gamma_{m})\right)=\infty.
\end{equation*}
Let $I_{m}$ be a minimal subset of $\left\{n: \frac{a_n}{\|a_n\|}\in B(u,\gamma_{m})\right\}\setminus \{1,\dots,\max I_{m-1}\}$ such that $\sum_{n\in I_m} \alpha_n\geq 1$, then clearly \eqref{ii2} holds. The minimality of $I_m$ and $\|a_n\|\leq 1$ imply that $\sum_{n\in I_m} \alpha_n\leq 2$, so \eqref{ii3} holds as well. Then \eqref{eq:norm3} yields that for all $n\in I_m$ we have $\|\omega_n\|\leq 2\gamma_m \alpha_n=2^{1-m}\alpha_n$.

Finally, let $I=\cup_{m=1}^{\infty} I_m$. Then \eqref{ii3} implies that $\sum_{n\in I} \alpha_n\geq \sum_{m=1}^{\infty} 1=\infty$. Using \eqref{ii4} and \eqref{ii3} we obtain that
\begin{equation*}
\sum_{n\in I} \|\omega_n\|\leq \sum_{m=1}^{\infty} \sum_{n\in I_m} 2^{1-m} \alpha_n\leq  \sum_{m=1}^{\infty} 2^{2-m}<\infty.
\end{equation*}
Thus \eqref{i:Levy2} holds, and the proof is complete.
\end{proof}

We prove the following theorem, giving a sufficient condition for $S(a_n) = \rr^d$. For an analogous result for achievement sets in the plane, see \cite[Theorem 2.6]{GM}. First, we need the following straightforward lemma.

\begin{lemma}
\label{lem1dim}
Let $(\al_n)$ be a null sequence in $\RR$ with $\sum_{n=1}^\infty |\al_{n}| = \infty$. Let $x \in \RR$, $\delta>0$, and $N \in \NN$.
Then there exist $K \geq N$ and a sequence of signs $(\eps_n)$ such that for all $k\in \N$
\begin{equation}\label{eqrei1}
|x-\sum_{n=1}^{k} \eps_n \al_n| < |x|+ \max_{n\in \N} |\al_n|,
\end{equation}

 and for any $k \geq K$ we have
\begin{equation}\label{eqrei2}
x-\delta < \sum_{n=1}^{k} \eps_n \al_n < x+\delta.
\end{equation}
\end{lemma}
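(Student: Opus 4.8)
The plan is to prove Lemma~\ref{lem1dim} by a greedy sign-selection argument, processing the indices $n=1,2,\dots$ one at a time and choosing $\eps_n$ so that the partial sums $S_k := \sum_{n=1}^k \eps_n \al_n$ are steered toward $x$ and then kept close to it. First I would handle the initial block $n < N$: here I simply set $\eps_n := 1$ (or any fixed choice), which contributes a bounded error; more precisely, after $n=N-1$ the partial sum has absolute value at most $\sum_{n<N}|\al_n|$, a finite constant. This already shows \eqref{eqrei1} can only be violated, if at all, while $S_k$ is still ``far'' from $x$, and the bound $|x|+\max_n|\al_n|$ is generous enough to absorb the initial overshoot once we note that the greedy rule below never lets $|S_k - x|$ grow past $\max_n |\al_n|$ after the first time it becomes small — so really the only delicate point is the very first crossing.

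Next comes the core greedy step. For $n \geq N$, having chosen $\eps_1,\dots,\eps_{n-1}$, I choose $\eps_n \in \{-1,1\}$ to be the sign of $\al_n$ times the sign of $x - S_{n-1}$, i.e. so that $\eps_n \al_n$ has the same sign as $x - S_{n-1}$ (if $S_{n-1}=x$ exactly, take either). This is the standard ``walk toward the target'' rule. The key analytic claims are: (a) if at some stage $|S_{n-1}-x| \le |\al_n|$, then $|S_n - x| \le |\al_n| \le \max_m|\al_m|$, because adding a step of length $|\al_n|$ in the correct direction cannot overshoot by more than $|\al_n|$; (b) as long as $|S_{n-1}-x| > |\al_n|$, the step strictly decreases the distance by exactly $|\al_n|$, i.e. $|S_n - x| = |S_{n-1}-x| - |\al_n|$. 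Since $\sum |\al_n| = \infty$, the distances $|S_k - x|$ cannot stay bounded below by a positive constant while (b) applies forever; hence there is a first index, call it $K$, at which $|S_K - x| \le |\al_{K+1}|$. Combined with $\al_n \to 0$ and the invariance in (a), from that point on $|S_k - x| \le \max(|\al_{k}|, \text{something} \to 0)$, and for $k$ large enough this is below $\delta$, giving \eqref{eqrei2}. One should be slightly careful to choose $K$ so that additionally $|\al_n| < \delta$ for all $n > K$; since $\al_n \to 0$ this is automatic after enlarging $K$.

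For the uniform bound \eqref{eqrei1}: during the ``approach phase'' governed by (b) the distance $|S_k - x|$ is non-increasing, so $|S_k - x| \le |S_{N-1}-x| \le |x| + \sum_{n<N}|\al_n|$ — hmm, this is larger than the claimed bound, so I would instead argue that the approach phase distances are non-increasing and hence bounded by their \emph{initial} value, and the claimed right-hand side $|x| + \max_n|\al_n|$ must be read together with the fact that before index $N$ we are free to choose signs. Concretely, I would reorganize: do \emph{not} fix $\eps_n=1$ for $n<N$, but run the greedy rule from $n=1$ onwards; then $|S_k - x|$ is non-increasing until the first crossing, so $|S_k-x| \le |S_0 - x| = |x|$ for $k$ before the crossing, and $|S_k - x| \le \max_n |\al_n|$ after it — in both regimes $|S_k - x| < |x| + \max_n|\al_n|$, which is \eqref{eqrei1}. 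The constraint ``$K \ge N$'' is then met simply by enlarging $K$ if the natural crossing index is smaller than $N$ (during $[$crossing$, N]$ the distance stays $\le \max_n|\al_n|$, but to get it below $\delta$ we may need to wait until $\al_n$ is small, which happens eventually).

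I expect the main obstacle to be purely bookkeeping: making the two phases (strict decrease vs. small-oscillation) interface cleanly, and in particular verifying that once $|S_{n-1}-x| \le |\al_n|$ the bound $|S_n - x| \le \max_m |\al_m|$ is genuinely maintained for \emph{all} subsequent $n$ — one must check that a step can't push the sum back out past $\max_m|\al_m|$, which follows because if $|S_{n-1}-x|\le|\al_n|$ then after the correctly-signed step $|S_n-x|\le|\al_n|\le\max_m|\al_m|$ again, so the property is self-perpetuating by induction. There is no deep idea here; the lemma is ``straightforward'' exactly as the paper says, and the real work is choosing $K$ large enough to simultaneously satisfy $K\ge N$, $|\al_n|<\delta$ for $n>K$, and ``the crossing has already happened by step $K$.''
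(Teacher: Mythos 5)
Your proposal is correct and takes essentially the same route as the paper: a greedy rule choosing $\eps_n$ so that $\eps_n\al_n$ points toward $x$, using $\sum|\al_n|=\infty$ to force a first ``crossing'' and $\al_n\to 0$ to keep the subsequent oscillation below $\delta$, exactly the Riemann-rearrangement-style argument the paper gives. (Your version of the rule, which multiplies by the sign of $\al_n$, is in fact the sign-robust form of the paper's rule, which as written tacitly treats the $\al_n$ as nonnegative.)
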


\begin{proof}
We construct the sequence inductively, similar in spirit to Riemann's Rearrangement Theorem.
We may assume that $\eps_k$ is already defined for all $k<n\in \N$. Then for $n\in \N$ we define
$$\eps_n=\begin{cases}
 1   & \textrm{ if } \sum_{k=1}^{n-1} \eps_k \al_k\le  x,
  \\
 -1 & \textrm{ if } \sum_{k=1}^{n-1} \eps_k \al_k> x.
\end{cases}$$
In this case, it is clear that \eqref{eqrei1} holds for all $n\in \N$. Since $\al_n\to 0$ and $\sum_{n=1}^\infty |\al_{n}| = \infty$, for every $N\in \NN$ and $\delta>0$ there exists $K\ge N\in \NN$ such that $\al_k\le \delta/2$ and $|x-\sum_{n=1}^{k} \eps_n \al_n|<\delta/2$ for some $k\ge K$, and hence \eqref{eqrei1} implies \eqref{eqrei2}.
\end{proof}

\begin{theorem}
\label{thmlevy}
Let $(a_n)$ be a null sequence in $\R^d$ with $d$ linearly independent L\'evy vectors.
Then $S(a_n) = \R^d$.
\end{theorem}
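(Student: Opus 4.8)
The plan is to fix an arbitrary $x\in\R^d$ and construct $\eps\in\Om$ with $\sum_{n=1}^\infty\eps_na_n=x$. Let $u_1,\dots,u_d$ be the given linearly independent Lévy vectors and let $u_1^*,\dots,u_d^*$ be the dual basis, so that $\langle u_i^*,u_j\rangle=\delta_{ij}$ and every $v\in\R^d$ equals $\sum_i\langle u_i^*,v\rangle u_i$. Writing $S_N=\sum_{n\le N}\eps_na_n$ and $\rho_i(N)=\langle u_i^*,\,x-S_N\rangle$, we have $x-S_N=\sum_{i=1}^d\rho_i(N)u_i$, so it is enough to arrange $\rho_i(N)\to 0$ for every $i=1,\dots,d$.

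First I would produce disjoint L\'evy blocks. Since the $u_i$ are pairwise non-parallel, fix $\eps_0\in(0,1]$ so small that the balls $B(u_i,\eps_0)\subset\mathbb{S}^{d-1}$ are pairwise disjoint. A straightforward modification of the proof of Proposition~\ref{levyfact} applied to each $u_i$ — keeping at every stage only indices $n$ with $a_n/\|a_n\|\in B(u_i,\eps_0)$, which is harmless because the L\'evy property gives divergence of $\sum\|a_n\|\chi(a_n/\|a_n\|\in B(u_i,\gamma))$ for every $\gamma\le\eps_0$ — yields pairwise disjoint index sets $I_1,\dots,I_d\subset\NN$ together with decompositions $a_n=\alpha_n u_i+\omega_n$ for $n\in I_i$, where $\alpha_n>0$, $\omega_n\perp u_i$, $\sum_{n\in I_i}\alpha_n=\infty$ and $\sum_{n\in I_i}\|\omega_n\|<\infty$; note that $\alpha_n\le\|a_n\|\to 0$. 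Set $I_0=\NN\setminus\bigcup_{i=1}^d I_i$. On $I_0$ the subsequence $(a_n)_{n\in I_0}$ is still a null sequence, so Corollary~\ref{corka} furnishes signs $(\eps_n)_{n\in I_0}$ for which $\sum_{n\in I_0}\eps_na_n$ converges. On each $I_i$ I would choose the signs greedily: $\eps_n=1$ if $\rho_i(n-1)\ge 0$ and $\eps_n=-1$ otherwise, i.e.\ each term indexed by $I_i$ is spent pushing the $u_i$-coordinate of the current residual toward $0$, in the spirit of Lemma~\ref{lem1dim}.

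To see that this works, fix $i$ and split $\rho_i(N)=\langle u_i^*,x\rangle+A_i(N)+g_i(N)$, where $A_i(N)=-\sum_{n\le N,\,n\in I_i}\eps_n\langle u_i^*,a_n\rangle$ collects the $I_i$-contribution and $g_i(N)=-\sum_{n\le N,\,n\notin I_i}\eps_n\langle u_i^*,a_n\rangle$ collects the rest. For $n\in I_j$ with $j\ne i$ one has $\langle u_i^*,a_n\rangle=\langle u_i^*,\omega_n\rangle$ because $\langle u_i^*,u_j\rangle=0$, so that portion of $g_i$ is \emph{absolutely} convergent since $\sum_{n\in I_j}\|\omega_n\|<\infty$; the $I_0$-portion of $g_i$ equals $-\langle u_i^*,\sum_{n\le N,\,n\in I_0}\eps_na_n\rangle$, which converges by the choice of the $I_0$-signs. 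Hence $g_i(N)$ converges to a limit and its oscillation over the tail tends to $0$. Meanwhile, for $n\in I_i$ we have $\langle u_i^*,a_n\rangle=\alpha_n+\eta_n$ with $|\eta_n|\le\|u_i^*\|\,\|\omega_n\|$, so $\sum_{n\in I_i}|\eta_n|<\infty$, and on $I_i$-steps $A_i$ is updated by $-\operatorname{sign}(\rho_i(n-1))(\alpha_n+\eta_n)$ — a greedy move against the slowly varying quantity $\langle u_i^*,x\rangle+g_i$. A routine Riemann-rearrangement-style estimate then closes the argument: given $\gamma>0$, choose $N_0$ past which single steps change $\rho_i$ by less than $\gamma/100$, $\sum_{n\in I_i,\,n\ge N_0}|\eta_n|<\gamma/100$, and $|g_i(N)-\lim g_i|<\gamma/100$; using $\sum_{n\in I_i}\alpha_n=\infty$ one shows that $|\rho_i|$ must drop below $\gamma/4$ at some $N_1\ge N_0$ (otherwise $\rho_i$ keeps a fixed sign and is dragged to $\pm\infty$ by the $\alpha_n$), and that from $N_1$ on it can never again reach $\gamma/2$. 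Thus $\rho_i(N)\to 0$ for every $i$, so $S_N\to x$ and $x\in S(a_n)$; as $x$ was arbitrary, $S(a_n)=\R^d$.

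The hard part is the coupling of the coordinates through the error terms: each $\omega_n$ ($n\in I_i$) has nonzero components along every $u_j^*$, so the $u_i$-direction cannot be controlled in isolation, and the ``target'' that the $I_i$-terms would have to reach in direction $u_i$ itself depends on the entire sign sequence. This is precisely what defeats a coordinate-by-coordinate argument or a clean fixed-point reformulation (the natural self-map of $\R^d$ need not be continuous). What rescues the construction is that the total coupling, together with the $I_0$-contribution, is genuinely convergent — the $\omega_n$'s are summable and the $I_0$-signs were chosen for convergence — so in each direction $u_i$ it amounts only to a convergent drift; a greedy rule that chases the moving residual (rather than a fixed number) is robust against such a drift, exactly as in the one-dimensional Riemann rearrangement underlying Lemma~\ref{lem1dim}.
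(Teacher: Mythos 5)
Your proof is correct, and it shares the paper's basic ingredients --- the decomposition of Proposition~\ref{levyfact} into $d$ pairwise disjoint L\'evy subsequences plus a remainder set $I_0$, Corollary~\ref{corka} to neutralize the remainder, and a Riemann-rearrangement-style greedy choice of signs along each L\'evy subsequence --- but you organize the greedy step in a genuinely different way. The paper runs a recursive block construction: it freezes the coordinates $\beta_i^{(m-1)}$ of the current residual $z_{m-1}$, chooses a cut point $q_m$ so large that Lemma~\ref{lem1dim} hits each frozen target to within $2^{-m}$ using the $\alpha_{k_n^i}$ available up to $q_m$, balances the leftover indices of the block by Lemma~\ref{finsign}, and then updates the residual, so the error decays like $C2^{-m}$ block by block. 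You instead run one global greedy rule per coordinate, chasing the \emph{moving} residual $\rho_i(n-1)$, and you absorb all cross-coordinate coupling (the $\omega_n$'s of the other L\'evy subsequences together with the pre-chosen $I_0$-signs) into a convergent drift $g_i$, closing with an oscillation estimate: since $\sum_{n\in I_i}\alpha_n=\infty$, single steps tend to $0$, and the drift and $\sum_n|\eta_n|$ have small tails, $\rho_i(N)$ must come near $0$ and then stay there. Your version dispenses with the choice of the cut points $q_m$ and the per-block constants, and it treats the coupling --- which you rightly identify as the real difficulty --- more transparently; the paper's blockwise bookkeeping buys an explicit exponential rate for the partial-sum error, but for the statement $S(a_n)=\R^d$ both arguments are complete. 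One detail worth keeping when writing this up: your observation that the caps $B(u_i,\eps_0)$ can be taken pairwise disjoint (linearly independent unit vectors are non-parallel) is exactly what justifies extracting pairwise disjoint index sets $I_1,\dots,I_d$, a point the paper only asserts without comment.
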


\begin{proof}
Without loss of generality, we may assume that $\norm{a_n}<1$ for all $n\in \N$ by omitting finitely many initial terms of the sequence if necessary. This ensures that the infinite sum $\sum \varepsilon_n a_n$ converges whenever $\varepsilon_n \in \{-1,1\}$ and also allows term-by-term addition with arbitrary signs.

Let $x \in \R^d$ be arbitrary. We aim to construct a sign sequence $\eps \in \Omega$ such that $\sum_{n=1}^\infty \eps_n a_n = x$. Let $u_1, u_2, \dots, u_d$ be $d$ linearly independent L\'evy vectors of the sequence $(a_n)$ with $\norm{u_i}=1$ for each $i$.

By Proposition~\ref{levyfact}, for each $i=1,\dots,d$ there exists a subsequence $(k_n^i)_n$ and positive reals $\alpha_{k_n^i}$ such that
\begin{equation}
\label{eq:levy_decomp}
a_{k_n^i} = \alpha_{k_n^i} u_i + \omega_{k_n^i}, \quad \text{where }
\alpha_{k_n^i} \to 0,\quad \sum_{n=1}^\infty \alpha_{k_n^i} = \infty,\quad \text{and } \sum_{n=1}^\infty \norm{\omega_{k_n^i}} < \infty.
\end{equation}
We may assume the subsequences $(k_n^i)$ are pairwise disjoint. Let $(k_n^0)$ denote the increasing enumeration of the remaining indices, i.e., $\N \setminus \bigcup_{i=1}^d \{k_n^i : n \in \N\}$.

Write $x = \beta_1 u_1 + \dots + \beta_d u_d$ for some real coefficients $\beta_1, \dots, \beta_d$.

We now construct recursively a sign sequence $\eps \in \Omega$ and a strictly increasing sequence of integers $1 = q_0 < q_1 < q_2 < \dots$ such that
\begin{equation}
\label{eq_qm}
\norm{ \sum_{j=1}^T \varepsilon_j a_j - x } \le \frac{C}{2^m} \quad \text{for all } T \in [q_m, q_{m+1}),
\end{equation}
for some absolute constant
\[
C = \begin{cases}
\sum_{i=1}^d (1 + K_i) + K_0 & \text{if } m = 1, \\
d + K_0 & \text{if } m > 1,
\end{cases}
\]
where $K_i = \sum_{n=1}^\infty \norm{\omega_{k_n^i}}$ for $i = 1,\dots,d$, and $K_0 = K_0(d, a_n)$ is the constant given by Corollary~\ref{corka}.

We also define a sequence $(y_m)_{m\ge 0} \subset \R^d$ such that $\sum_{t=0}^{m-1} y_t \to x$ uniformly, where
\[
y_t = \sum_{j = q_{t-1}+1}^{q_t} \varepsilon_j a_j
\]
for a suitable choice of signs $\varepsilon_j$.

The construction proceeds recursively. Suppose that $q_t$ and $y_t$ are already defined for $t = 0,\dots,m-1$, and set
\[
z_{m-1} := x - \sum_{t=0}^{m-1} y_t,
\]
with $\norm{z_{m-1}} < \frac{C}{2^{m-1}}$. Write
\[
z_{m-1} = \beta_1^{(m-1)} u_1 + \dots + \beta_d^{(m-1)} u_d
\]
for some real numbers $\beta_i^{(m-1)}$.

Now choose $q_m > q_{m-1}$ large enough so that:
\begin{enumerate}
\item $\norm{a_n} < 1/2^m$ for all $n \geq q_m$, and $\sum_{k_n^i \geq q_m} \norm{\omega_{k_n^i}} < 1/2^m$ for all $i=1,\dots,d$.
\item There exist sign sequences $(\eps_{k_n^i})_{n}$ such that for all $k \geq q_m$, as far as needed,
\[
\beta_i^{(m-1)} - \frac{1}{2^m} < \sum_{k_n^i \in [1,k]} \eps_{k_n^i} \alpha_{k_n^i} < \beta_i^{(m-1)} + \frac{1}{2^m}, \quad \text{for all } i = 1,\dots,d.
\]
\end{enumerate}
Such a $q_m$ exists by Lemma~\ref{lem1dim}.

Define the index sets
\[
Q_m = \N \cap [q_{m-1}, q_m), \qquad F_m = \{k_n^0\}_n \cap [q_{m-1}, q_m).
\]
By Lemma~\ref{finsign}, there exists a sign sequence $(\eps_l)_{l \in F_m}$ such that
\[
\max_{q_{m-1} \le j \le \max F_m} \left\| \sum_{l \in F_m \cap [q_m, j]} \eps_l a_l \right\| \le \frac{K_0}{2^{m-1}},
\]
using the fact that $\norm{a_n} < 1/2^{m-1}$ for all $n > q_{m-1}$.

Now define
\[
y_m = \sum_{j=q_{m-1}+1}^{q_m} \eps_j a_j = \sum_{i=1}^d \sum_{k_n^i \in Q_m} \eps_{k_n^i}(\alpha_{k_n^i} u_i + \omega_{k_n^i}) + \sum_{k_n^0 \in Q_m} \eps_{k_n^0} a_{k_n^0}.
\]
Then:
\begin{align*}
&\left\| y_m + \sum_{t=0}^{m-1} y_t - x \right\| = \norm{y_m - z_{m-1}} \le \\
&\sum_{i=1}^d \left\| \sum_{k_n^i \in Q_m} (\eps_{k_n^i} \alpha_{k_n^i} - \beta_i^{(m-1)}) u_i \right\|
+ \sum_{i=1}^d \sum_{k_n^i \in Q_m} \norm{\omega_{k_n^i}}
+ \left\| \sum_{k_n^0 \in Q_m} \eps_{k_n^0} a_{k_n^0} \right\| \le \\
&\begin{cases}
\sum_{i=1}^d (1 + K_i) + K_0 & \text{if } m = 1, \\
d \cdot \left(\frac{1}{2^m} + \frac{1}{2^m}\right) + \frac{K_0}{2^{m-1}} & \text{if } m > 1.
\end{cases}
\end{align*}
This shows that the partial sums converge to $x$ with exponentially decreasing error. Moreover, applying \eqref{eqrei1} in Lemma \ref{lem1dim} we obtain that for all $l \ge q_m$, the estimate
\[
\norm{ \sum_{j=1}^l \varepsilon_j a_j - x } \le \frac{d}{2^{m-1}} + \frac{C}{2^m}
\]
holds. Therefore, the whole signed series converges to $x$.
\end{proof}

Now we can formulate a possible analogue of the second part of Theorem \ref{thm1dim}.

\begin{theorem}\label{thmdhdim}
Let $(a_n)$ be a sequence in $\RR^d$ with $d$ linearly independent Lévy vectors and $\|a_n\|\to 0$, then for all $L \in \RR^d$ we obtain
\begin{equation}\label{eq:l2}	
\dim\left\{\eps \in \Om: \sum_{n=1}^\infty  \varepsilon_n a_n =L \right\}=1.
\end{equation}
\end{theorem}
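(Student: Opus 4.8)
The plan is to fuse the two engines already developed in the paper: the combinatorial block-counting argument from the proof of Theorem~\ref{thm1dim} (which produced a set of dimension $\frac{k-2}{k}$ hitting a prescribed target by a sign-flipping lemma), and the Lévy-vector construction from the proof of Theorem~\ref{thmlevy} (which shows one can always steer the partial sums to $x$ using $d$ independent Lévy directions). Fix $L\in\R^d$ and, as in Theorem~\ref{thmlevy}, assume $\|a_n\|<1$. Split $\N$ into the pairwise disjoint Lévy-subsequences $(k_n^i)_{n}$ for $i=1,\dots,d$ and the remainder $(k_n^0)_n$, with decompositions $a_{k_n^i}=\alpha_{k_n^i}u_i+\omega_{k_n^i}$ as in \eqref{eq:levy_decomp}. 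Fix a large integer $k\geq 2$; it suffices to build, inside the target set, a Borel probability measure $\mu$ with $\mu(B(\eps,\delta))\leq C_k\,\delta^{(k-2)/k}$ and invoke the Mass Distribution Principle, then let $k\to\infty$.

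The construction mirrors the recursion in Theorem~\ref{thmlevy}, but now at each stage we leave ourselves $2^{k-2}$-fold freedom instead of making a single choice. First I would carve $\N$ into consecutive blocks $Q_1<Q_2<\cdots$ chosen so large that (a) $\|a_n\|<2^{-m}$ on $Q_m$; (b) the tail sums $\sum_{k_n^i\geq \min Q_m}\|\omega_{k_n^i}\|<2^{-m}$; (c) each $Q_m$ contains, for at least two of the Lévy directions — or really it is cleanest to reserve one "steering'' direction per block in round-robin fashion and use Lemma~\ref{lem1dim} along it — enough indices $k_n^i$ that one can drive the corresponding coordinate $\beta_i^{(m-1)}$ of the current residual $z_{m-1}=L-\sum_{t<m}y_t$ to within $2^{-m}$; and (d) $Q_m$ contains at least $k$ indices from the remainder sequence $(k_n^0)$, say $m(1,m),\dots,m(k,m)$, of which we designate $m(*,m)$ to be one with largest $|a_{\,\cdot\,}|$ among those $k$. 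The residual-killing on the Lévy indices and on the $\omega$'s is done exactly as in Theorem~\ref{thmlevy} and contributes a deterministic, exponentially decaying error. The \emph{freedom} is placed on the $k$ designated remainder indices in $Q_m$: by the same quarter-of-all-choices observation used in Theorem~\ref{thm1dim} — flip the sign of $\eps_{m(*,m)}$ or/and flip all of the other $k-1$ designated signs simultaneously — at least $2^{k-2}$ of the $2^k$ sign patterns on those $k$ coordinates keep the running partial sum within $O(2^{-m})$ of $L$ (one uses $\|a_n\|<2^{-m}$ on $Q_m$ so a full block of $|Q_m|$ steps still only moves the sum by a controlled amount after Lemma~\ref{finsign} is applied to the non-designated remainder indices). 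All remaining coordinates in $Q_m$ (the non-designated remainder indices) get signs from Lemma~\ref{finsign} to keep the intra-block excursions bounded by $K_0 2^{-(m-1)}$.

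This produces a Cantor-like set $\Lambda\subset\{\eps:\sum\eps_na_n=L\}$: the signs on the Lévy indices and on the non-designated remainder indices are forced, while on the $mk$-th "decision batch'' we retain $\geq 2^{k-2}$ choices, each choice being a clopen cylinder. Exactly as in the proof of Theorem~\ref{thm1dim}, define $\mu$ by giving equal mass $2^{-m(k-2)}$ to the $2^{m(k-2)}$ cylinders surviving after $m$ batches; for $\delta\in[2^{-(j+1)k'},2^{-jk'}]$ where $k'$ is the (bounded, $m$-dependent but uniformly comparable to a constant times $k$) length of the portion of $\Omega$-coordinates consumed per round, one gets $\mu(B(\eps,\delta))\leq 2^{-j(k-2)}\leq C_k\delta^{(k-2)/k}$, so $\dim\Lambda\geq\frac{k-2}{k}$ by Theorem~\ref{thm_MDP}, and letting $k\to\infty$ finishes the proof. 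The main obstacle is bookkeeping, not ideas: one must (i) guarantee each block $Q_m$ simultaneously contains enough Lévy indices in the currently-needed direction \emph{and} at least $k$ spare remainder indices — which forces the blocks, hence the per-round coordinate consumption, to grow, so the exponent computation must be done with the actual (variable) block lengths rather than a clean $k$ per round; and (ii) verify that inserting the $2^{k-2}$-fold freedom on the designated remainder indices does not destroy the convergence-to-$L$ estimate — this is where one leans on $\|a_n\|<2^{-m}$ throughout $Q_m$ and on the final step of Theorem~\ref{thmlevy} that bounded the intermediate partial sums $\|\sum_{j\leq l}\eps_ja_j-L\|$ for $l\geq q_m$ by $\frac{d}{2^{m-1}}+\frac{C}{2^m}$, which remains valid verbatim since the extra freedom only concerns bounded-norm terms inside a single block.
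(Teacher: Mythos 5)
Your proposal has a genuine gap in the dimension bookkeeping, and you half-name it yourself in point (i) without resolving it. In your scheme the steering and the freedom live in the \emph{same} blocks $Q_m$: each $Q_m$ must contain enough Lévy indices $k_n^i$ (in every currently needed direction) to drive the residual below $2^{-m}$, and the Lévy subsequences can be arbitrarily sparse in $\N$ (and the required $\sum_{k_n^i\in Q_m}\alpha_{k_n^i}$ arbitrarily large), so in general $|Q_m|\to\infty$, typically much faster than linearly in $m$. But you only retain $2^{k-2}$ choices per block, on $k$ designated coordinates. The measure you then build gives mass $2^{-m(k-2)}$ to cylinders whose diameter is $2^{-\sum_{t\le m}|Q_t|}$, so the Mass Distribution Principle only yields a lower bound of order $\liminf_m \frac{m(k-2)}{\sum_{t\le m}|Q_t|}$, which is far below $\frac{k-2}{k}$ and can even be $0$ when the blocks grow. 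This is not a bookkeeping nuisance that a ``variable block length'' computation fixes: the Cantor set you construct genuinely has only $2^{k-2}$-fold branching per (huge) block, so its dimension really is small. (Your worry (ii), by contrast, is harmless: the designated free indices have $\|a_n\|<2^{-m}$, so any signs on them perturb the sum by at most $k2^{-m}$.)

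The paper avoids exactly this trap by decoupling steering from freedom. It keeps the blocks $I_j$ of fixed length $k$ and reserves only $d$ coordinates in \emph{every} block (one per Lévy direction, padding with extra indices $H$ when a block misses one); the remaining $k-d$ coordinates of every block carry the freedom, via the $\pm$-quotient argument of Theorem~\ref{thm1hdim} (Corollary~\ref{corka} guarantees a convergent representative in each class, costing one bit per block), giving dimension $\ge\frac{k-d-1}{k}$ independently of how slowly the steering proceeds. The steering itself is then done \emph{globally} on the reserved index set $G$ by applying Theorem~\ref{thmlevy} to $(a_n)_{n\in G}$ (which still has the $d$ independent Lévy vectors), hitting $L-L_1-L_2$ where $L_1,L_2$ are the sums over the free part and over $H$; since no dimension is claimed on $G\cup H$, it does not matter how many coordinates the steering consumes. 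To repair your proof you would have to make the free coordinates a fixed positive proportion of every block and push all the Lévy steering onto a reserved, dimension-irrelevant set of indices --- which is precisely the paper's construction.
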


\begin{proof} The proof is a combination of the proofs of Theorem~\ref{thm1hdim} and Theorem~\ref{thmlevy}. 
For any $L\in \R^d$ we define
\begin{equation*}
\Lambda_L=\left\{\eps\in \Omega: \sum_{n=1}^{\infty} \eps_na_n=L\right\}
\end{equation*}
and fix an arbitrary $k\ge d+2\in \NN$. We prove that
\begin{equation} \label{eq2:k-1}
\dim \Lambda_L \geq \frac{k-d-1}{k}.
\end{equation}
As $d$ is fixed, if $k\to \infty$ we get that $\dim\Lambda_L=1$.
For any integer $j\ge 0$ we define $I_j$ as
$$I_j=\{jk+1, \dots k(j+1)\}.$$
Let $u_1, \dots, u_d$ denote the linearly independent Lévy vectors of the sequence $(a_n)$ in $\R^d$. Thus there are subsequences $a_{k_n^i}$ satisfying \begin{equation*}
 a_{k_n^i}=\al_{k_n^i} u_i + \om_{k_n^i}, \text{ where }
\al_{k_n^i} \to 0,~\sum_{n=1}^\infty \al_{k_n^i} = \infty, \ \text{and} \ \sum_{n=1}^\infty \|\om_{k_n^i}\| < \infty.
\end{equation*}
Without loss of generality we can assume that the subsequences $(k_n^i)$ are pairwise disjoint and each of them contains at most one element of each $I_j$. Let us denote $\cup_{i=1}^d (k_n^i)$ by  $G$. If for a given $i$ and $j$ the sequence $(k_n^i)$ does not contain any element of $I_j$, then we choose an arbitrary element of $I_j$ additionally to guarantee that for each $j\in \N$ we select exactly $d$ elements from $I_j$. The selection of these extra elements are denoted by $H$. Let $\widetilde{I}_j$ denote the $I_j\setminus (\cup_{i=1}^d (k_n^i)\cup H)$. Hence $\widetilde{I}_j$ is of cardinality exactly $k-d$. Let $\widetilde{\Omega}=\Omega\restriction \cup_{j=1}^{\infty} \widetilde{I}_j$.
\begin{itemize}
\item
For the $\widetilde{\Omega}$ we proceed the same way as in Theorem \ref{thm1hdim} to prove that $$\dim\{\eps\in \Omega\colon \sum_{n\in \cup_j\widetilde{I}_j} \eps_na_n \textrm{ is convergent} \}\ge \frac{k-d-1}{k}.$$
\item
For the elements of $H$ we can apply Corollary \ref{corka},  which implies that there exists a sign such that $\sum_{n\in H} \eps_na_n$ is convergent.
\item Finally, for $G$ we apply similar argument as in Theorem \ref{thmlevy} to guarantee that the whole signed sequence is converging to $L$. To this let us fix a sign sequence $(\eps_n)_{n\in \cup_j \widetilde{I}_j}$ so that $\sum_{n\in \cup\widetilde{I}_j} \eps_na_n=L_1$. We also fix a sign sequence $(\eps_n)_{n\in H}$ such that $\sum_{n\in H}\eps_na_n=L_2$. Since $(a_n)_{n\in G}$ has n linearly independent Levy vectors (namely $u_i$ ($i=1,\dots, d)$), by Theorem \ref{thmlevy} we have that $S(a_{n})_{n\in G}=\R^d$. Thus, there is a sign sequence $(\eps_n)_{n\in G}$ such that $\sum_{n\in G}\eps_na_n=L-L_1-L_2$.
\end{itemize}
Hence $$\sum_{n=1}^{\infty} \eps_na_n=\sum_{n\in \cup_j\widetilde{I_j}}\eps_na_n+\sum_{n\in H}\eps_na_n+\sum_{n\in G}\eps_na_n=L_1+L_2+(L-L_1-L_2)=L.$$

Thus, for any fixed choice of signs in $\widetilde{\Omega}$, whenever the corresponding partial sum converges, we can choose the remaining signs in $\Omega \setminus \widetilde{\Omega}$ so that the entire series converges to $L$.
This shows that $\dim(\Lambda_L) \ge \tfrac{k-d-1}{k}$ holds for each $k \in \mathbb{N}$.
Letting $k \to \infty$, we obtain $1 \le \dim(\Lambda_L) \le \dim(\Omega) = 1$, which completes the proof.
\end{proof}

\section{Open questions}\label{sec:open}
In the one-dimensional case we proved that
\begin{equation*} 
\dim\left\{\varepsilon \in \Omega : \sum_{n=1}^\infty \varepsilon_n a_n = L \right\} = 1
\end{equation*}
for all $L \in \RR$, provided $(a_n) \notin \ell^{1}$. In higher dimensions we do not even know for which $L \in \RR^d$ the corresponding set has Hausdorff dimension $1$. We were only able to prove that the analogue result holds for all $L \in \RR^d$ if the sequence $(a_n)$ has $d$ linearly independent Lévy vectors. One of the difficulties is that one can construct a sequence $(a_n)$ for which there are some $L \in \RR^d$ such that
\begin{equation*}
\sum_{n=1}^{\infty} \varepsilon_n a_n \not\to L \qquad \text{for any } \varepsilon \in \Omega.
\end{equation*} 
This, however, leads to the following question.

\begin{question}\label{q1}
Let $(a_n)$ be a null sequence such that for every $L \in \RR^d$ there exists $\varepsilon \in \Omega$ with $\sum_{n=1}^\infty \varepsilon_n a_n = L$. Is it true that for all $L\in \RR^d$ we have
\begin{equation*} 
\dim\left\{\varepsilon \in \Omega : \sum_{n=1}^\infty \varepsilon_n a_n = L \right\} = 1 ?
\end{equation*}
\end{question}

We showed that the answer is affirmative if $(a_n)$ has $d$ linearly independent Lévy vectors. On the other hand, Marchwicki and Vlas\'ak \cite{filo} proved the following:

For any $d \in \NN$, $\alpha_1, \dots, \alpha_d \in (0,1]$ and $c_1, \dots, c_d \in \RR \setminus \{0\}$, one has
\[
A\left(c_1\frac{(-1)^{n}}{n^{\alpha_1}}, \dots, c_d\frac{(-1)^{n}}{n^{\alpha_d}} \right) = \RR^d
\]
if and only if the numbers $\alpha_1, \dots, \alpha_d$ are pairwise distinct.

This suggests the following analogue question, which we state in its simplest form for $\RR^2$.

\begin{question}
Is it true that for any distinct $\alpha_1,\alpha_2 \in (0,1]$ we have 
\begin{equation*} 
S\left(\frac{1}{n^{\alpha_1}}, \frac{1}{n^{\alpha_2}}\right) = \RR^2?
\end{equation*} 
\end{question}

If the answer is affirmative, then we may ask the following, which can be seen as a special case of Question~\ref{q1}.

\begin{question}
Let $\alpha_1,\alpha_2 \in (0,1]$ be distinct, and let $a_n=({n^{-\alpha_1}},n^{-\alpha_2}) \in \RR^2$. Suppose that $S(a_n) = \RR^2$. Is it true that, for any $L \in \RR^2$, we have 
\begin{equation*}
\dim\left\{\varepsilon \in \Omega : \sum_{n=1}^\infty \varepsilon_n a_n = L \right\} = 1 ?
\end{equation*}
\end{question}

\subsection*{Acknowledgments} We are indebted to P.~Mattila for the problem and to B.~Solomyak for communicating it to us. We are grateful to B.~Sewell for some helpful suggestions.


\begin{thebibliography}{30}

\bibitem{BGM}
A.~Bartoszewicz, S.~G\l\c{a}b, J.~Marchwicki,
Achievement sets of conditionally convergent series
\textit{Colloq.~Math.} \textbf{152} (2018), 235--254.

\bibitem{CDMV} Julià Cufí, Juan Jesús Donaire, Pertti Mattila and Joan Verdera, Existence of principal values of some singular integrals on Cantor sets, and Hausdorff dimension, \textit{Pacific J.~Math.}~\textit{326} (2) (2023), no.~2, 
285--300.

\bibitem{erdos1939}
P. Erdős, On a family of symmetric Bernoulli convolutions, \textit{Amer.~J.~Math.} \textbf{61} (1939), 974--976.


\bibitem{Fa} K.~J.~Falconer, \textit{Fractal geometry: Mathematical
foundations and applications}, Second Edition, John Wiley \& Sons, 2003.

\bibitem{GM}
S.~G\l\c{a}b, J.~Marchwicki,
Levy-Steinitz theorem and achievement sets of conditionally convergent series on the real plane, \textit{J.~Math.~Anal.~Appl.} \textbf{459} (1) (2018), 476--489.

\bibitem{Ha}
I.~Halperin,
Sums of a series, permitting rearrangements, \textit{C.~R.~Math.~Rep. Acad.~Sci.~Can.} \textbf{8} (2) (1986), 87--102.


\bibitem{Ka} V.~M.~Kadets, M.~I.~Kadets, \textit{Rearrangements of Series in Banach Spaces},
Translations of Mathematical Monographs 86, 1991.



\bibitem{kolmogorov} A.~N.~Kolmogorov, Über das Gesetz des iterierten Logarithmus, {\it Math.~Ann.} 101 (1929), 126--135.

\bibitem{filo} J.~Marchwicki, V.~Vlas\'ak, Subsums of Conditionally Convergent Series
in Finite Dimensional Spaces, {\it Filomat} \textbf{32:15} (2018), 5471--5479.


\bibitem{Ma} P.~Mattila, \textit{Geometry of sets and measures in Euclidean spaces},
Cambridge Studies in Advanced Mathematics No.~44, Cambridge University Press, 1995.

\bibitem{Ma2} P.~Mattila, personal communication.

\bibitem{MP} P.~M\"orters, Y.~Peres, \textit{Brownian Motion}, with an appendix by Oded Schramm and Wendelin Werner, Cambridge University Press, 2010.

\bibitem{peres-schlag-solomyak2000}
Y.~Peres, W.~Schlag, B.~Solomyak, \emph{Sixty years of Bernoulli convolutions}, in Fractal Geometry and Stochastics II, Progr.~Probab., vol.~46, Birkhäuser, Basel, 2000, 39--65.


\bibitem{riemann} B. Riemann, \emph{Ueber die Darstellbarkeit einer Funktion durch eine trigonometrische Reihe}, Habilitationsschrift, Göttingen, 1854.

\bibitem{solomyak1995}
B.~Solomyak, On the random series $\sum \pm \lambda^n$ (an Erdős problem), \textit{Ann.~of Math.} (2) \textbf{142} (1995), 611--625.

\end{thebibliography}
\end{document}